\newtheorem{theorem}{Theorem}[section]
\newtheorem{lemma}[theorem]{Lemma}
\newtheorem{proposition}[theorem]{Proposition}
\newtheorem{definition}[theorem]{Definition}
\newtheorem{remark}[theorem]{Remark}
\newcommand{\Vor}{\operatorname{Vor}}
\newcommand{\Ter}{\operatorname{Ter}}
\DeclareMathOperator*{\argmin}{arg\,min}
\numberwithin{equation}{section}
\title{Voronoi cells in random split trees}
\author{Alexander Drewitz\thanks{Department Mathematik/Informatik, Universit\"at zu K\"oln. Email: adrewitz@uni-koeln.de} \and Markus Heydenreich\thanks{Mathematisches Institut, Ludwig-Maximilians-Universität München. Email: m.heydenreich@lmu.de}\and C\'ecile Mailler\thanks{Department of Mathematical Sciences, University of Bath, UK. Email: c.mailler@bath.ac.uk.}}
\newcommand{\bs}{\ensuremath{\boldsymbol}}
\newcommand{\sss}{\ensuremath{\scriptscriptstyle}}
\newcommand{\parent}{\overset{\sss\leftarrow}}
\newcommand{\op}{{o_{\mathbb P}}}
\newcommand{\Op}{{\mathcal O_{\mathbb P}}}
\begin{document}
\maketitle 

\begin{abstract}
We study the sizes of the Voronoi cells of $k$ uniformly 
chosen vertices in a random split tree of size $n.$
We prove that, for $n$ large, the largest of these $k$ Voronoi cells contains most of the vertices, 
while the sizes of the remaining ones are essentially all of order $n\exp(-\mathrm{const}\sqrt{\log n})$.
This discrepancy persists if we modify the definition of the Voronoi cells 
by (a) introducing random edge lengths (with suitable moment assumptions), 
and (b) assigning different ``influence'' parameters (called ``speeds'' in the paper) 
to each of the $k$ vertices. Our findings are in contrast to corresponding results on random uniform trees 
and on the continuum random tree, 
where it is known that the vector of the relative sizes of the $k$ Voronoi cells 
is asymptotically uniformly distributed on the $(k-1)$-dimensional simplex.
\end{abstract}

\section{Introduction}
\paragraph{Voronoi cells.} Consider a large graph $\mathcal G$, from which we choose $k$ vertices uniformly at random, and denote them by $U_1,\dots,U_k$. The \emph{Voronoi cell} $\Vor(U_j)$ of $U_j$ consists of those vertices that are closer in graph distance to $U_j$ than to any of the other chosen vertices $\{U_i\colon i=1,\dots,k; i\neq j\},$ with an arbitrary rule to break ties. We are studying the vector of proportional sizes \[\left(\frac{|\Vor(U_1)|}n,\dots,\frac{|\Vor(U_k)|}n\right),\] in the limit as $n\to\infty$, where $n=|\Vor(U_1)|+\cdots+|\Vor(U_k)|$ denotes the total number of vertices. 

In recent work, Addario-Berry et al.\ \cite{Chapuy} investigated this question for the case that $\mathcal G$ is a uniform tree, and proved that the limiting vector is uniform on the $(k-1)$-dimensional simplex. Indeed, they showed much more, namely that this is even true in a  limiting sense on the Brownian continuum random tree (henceforth CRT), and thus for all graph models that converge to the Brownian CRT in the Gromov-Hausdorff-Prokhorov topology: rooted plane trees, rooted unembedded binary trees, stacked triangulations, and others. 
Guitter \cite{Guitter_2017} proved the same uniform limit for the case that $\mathcal G$ is a random planar map of genus 0 and $k=2$. 
Chapuy \cite{Chapu19} made the far-reaching conjecture 
that the uniform limit is true for \emph{all random embedded graphs of fixed genus}. 

\paragraph{Our first contribution.}
In this paper, we look at the distribution of the Voronoi cells of $k$ 
uniform nodes in a random {\it split tree}. 
Split trees are a family of rooted trees introduced by Devroye~\cite{devroye-split} and later extended by Janson~\cite{Janson} who allowed trees of unbounded degrees: 
this family includes classical random trees such as the binary search tree, the random recursive tree, the preferential attachment tree 
(also called {\sc port} for ``plane oriented recursive tree'').
In our first main result, we prove that the largest of 
the Voronoi cells of $k$ uniform nodes in an $n$-node split tree 
contains a proportion~1 of all nodes.
We are also able to prove that the second, third, \dots, $k$-th largest Voronoi cells 
each contains an order $n\exp(-\mathrm{const}\sqrt{\log n})$ of all vertices.
We show that this result also holds when edges of the tree are given random i.i.d.\ lengths (of finite variance, or heavy-tailed but with finite mean), and defining the Voronoi cells with respect to the distance induced by these edge lengths instead of the graph distance.

This result is in contrast with the findings of~\cite{Chapuy} for the uniform random tree equipped with the graph distance:
the distribution of the sizes of Voronoi cells is balanced in the case of the uniform random tree 
(and other trees whose scaling limit is the CRT), 
while we show a ``winner takes it all'' behaviour in the case of split trees.
This difference in behaviour should not be surprising: 
it is well-known that split trees have a very different shape from the uniform random trees 
(and other random trees whose scaling limit is the CRT): for example, 
the typical height of an $n$-node split tree is $\log n$, 
while the typical height of the uniform random tree is $\sqrt n$.
In that sense, split trees belong to another universality class of random trees 
(as opposed to trees whose scaling limit is the CRT), 
and our first main result corresponds to the findings of \cite{Chapuy} for this second universality class.
Similarly to~\cite{Chapuy} conjecturing that their result generalises to maps that 
scale to the random Brownian map, 
one might expect that the behaviour we prove for random split trees might also be exhibited by other graphs such as preferential attachment graphs 
and other scale-free models such as the configuration model. 
However, our proofs cannot be straightforwardly generalised.


\paragraph{Extension to a competition/epidemics model.}
The Voronoi cells can be seen as the result of a \emph{competition model} 
where $k$ agents are claiming territory with uniform speed until they reach 
vertices that are already claimed by another agent. 
This procedure stops when all vertices are claimed by some agent; 
the final territories are the same as the Voronoi cells. 
As discussed above, our first result is that -- unlike in the case of uniform trees -- 
the final territories are rather unbalanced: 
while one agent will claim almost the entire tree, 
the rest has to live on a rather small territory. 
(This behaviour also persists when we introduce random edge lengths.)

One can also see this competition model as a 
competition between~$k$ mutually exclusive epidemics, 
which are started at $k$ uniform vertices of a split tree, 
and which all spread at constant and equal speed.
Our second main result is that,
if the speed of transmission varies among the different epidemics, 
then the fastest epidemic spreads over order $n$ of the vertices.
We are also able to estimate precisely the number of 
nodes that get infected by each of the slower epidemics.

Note that this ``winner takes it all'' effect has already been observed in a competing first-passage percolation model on the configuration model with tail exponent $\tau\in(2,3)$ (see \cite{deijfen2016}). The main difference with our model is that epidemics spread deterministically in our model, and randomly at a given rate in the competing first-passage percolation model.
In both models, one epidemic occupies eventually almost all of the available territory. 
In the case of different speeds, this is the fastest one, but in the case of equal speed this is determined by the initial position (see \cite{BaronHofstKomja15,HofstKomja15}, where this is proved for the competing first-passage percolation model). The competing first-passage percolation model on random regular graphs exhibits similar behaviour~\cite{Antun17}. 
This suggests that the uniform limiting proportion of Voronoi cells is not true on complex networks. Instead, our results support the belief that for competing epidemics on networks with small distances (``small-world graphs'') there is one dominating epidemic.

\paragraph{Information on the typical shape of a random split tree.}
The asymptotic sizes of the Voronoi cells (or territories) 
of $k$ nodes chosen uniformly at random in a tree 
gives information on the typical shape of a tree.
In fact, to prove our main result, 
we prove two results that may be of independent interest because they give information of the typical shape of a random split tree: 
(1) in Proposition~\ref{prop:profile} we show convergence in 
probability of the ``profile'' of a random split tree, 
and (2) in Proposition \ref{prop:fringe_trees}, we prove asymptotic results for the 
size of a typical ``extended'' fringe tree in a random split tree.

(1) The profile of a random tree is the distribution of the height (distance to the root)
of a node taken uniformly at random in the tree. 
If the tree is random then its profile is a random measure.
In Proposition~\ref{prop:profile}, we show that the profile of a random split tree behaves asymptotically (in probability) as a Gaussian centred around $\mathrm{const}\log n$ and of standard deviation $\mathrm{const}\log n$. 
Our framework includes the cases of the random binary and $m$-ary search trees, the random recursive tree and the preferential attachment trees, for which convergence of the profile is already known in the almost sure sense (see \cite{CDJ01}, \cite{MM17}, and \cite{Katona05}, respectively).

(2) Fringe trees are subtrees that are rooted at 
an ancestor of a node taken uniformly at random in the tree (or at the uniform node itself).
Oftentimes, this ancestor is chosen to be at constant distance of the uniform node (see, e.g.~\cite{JansonHolm} and the references therein).
In Proposition \ref{prop:fringe_trees}, we extend this definition to allow the ancestor to be at distance to the uniform node that tends to infinity with $n$, the number of nodes in the whole split tree.

\bigskip
The main technical obstables in our proofs comes from the three levels of randomness: 
(a) the trees we consider are random split trees, 
(b) we then sample i.i.d.\ random edge-lengths, and 
(c) we finally sample $k$ nodes uniformly at random in the tree.
The advantage of our approach is that the framework we consider is very wide: 
the random split trees we consider include, among others, the binary and $m$-ary search trees, the random recursive tree, and the preferential attachment tree;
our edge-length distribution can be of finite variance, or heavy-tailed with finite mean;
we allow the different epidemics to have identical or different speeds.

In the rest of this section, we define our model (Section~\ref{sub:def_split}) and state our main results (Section~\ref{sub:results}).

\subsection{Trees and random split trees}\label{sub:def_split}
In this paper, we use the Ulam-Harris definition of $m$-ary trees:
let $m\in\mathbb N$ and 
\[\mathcal D_m = \{1, 2, \ldots, m\}^* = \{\varnothing, 1, 2, \ldots, m, 11, 12, \ldots 1m, \ldots\},\] 
 be the set of all finite words on the alphabet $\{1, 2, \ldots, m\}$.
 We further consider the case of infinitary trees, where $m=\infty$ and $\mathcal D_\infty = \mathbb N^*$. 
 We henceforth formulate our results for finite and infinite $m$ in a unified fashion (unless stated explicitly); finite tuples, such as in \eqref{eqSig} below, should be interpreted as infinite sequence whenever $m=\infty$. 
\begin{definition}\label{def:tree}
An $m$-ary tree is a subset $t$ of $\mathcal D_m$ such that for all $w=w_1\cdots w_\ell\in t$, all the prefixes of $w$ are in $t$, i.e.\ for all $i\in \{0, \ldots, \ell\}$ one has $w_1\cdots w_i\in t$.
\end{definition}
See Figure~\ref{fig:UH} for an example of a $3$-ary tree. 
In the following, we collect some standard vocabulary and notations; they reflect the fact that a tree is often seen as a genealogical structure: 
\begin{itemize}[noitemsep]
\item words are called ``nodes'';
\item the prefixes of a word are its ``ancestors'':
we write $v\prec w$ if $v$ is an ancestor of $w,$ and $v\preccurlyeq w$ if $v$ is $w$ or an ancestor of $w$;
\item the longest of the (strict) prefixes of a word $w$ is its ``parent'', which we denote by $\parent v$;
\item a node is a ``child'' of its parent, and it is a ``descendant'' of each of its ancestors;
\item the ``siblings'' of a node $v$ are all those nodes different from $v$ that share the same parent with $v;$ its ``left-siblings'' (resp.\ ``right-siblings'') are all its siblings that are smaller (resp.\ larger) in the lexicographic order;
\item the word $\varnothing$ is the ``root'' of the tree;
\item the ``height'' of a node is the number of letters in the word (the root is at height~0);
\item the ``last common ancestor'' of two nodes is the longest prefix shared by the two nodes:
we write $u\wedge v$ for the last common ancestor of nodes $u$ and $v$.
\end{itemize}

In particular, the definition of a tree can be immediately rephrased using this new vocabulary reflecting the genealogical point of view:
a tree is a set of nodes such that if a node is in the tree, 
then all its ancestors must also be in the tree.

\begin{figure}
\begin{center}
\includegraphics[width=5cm]{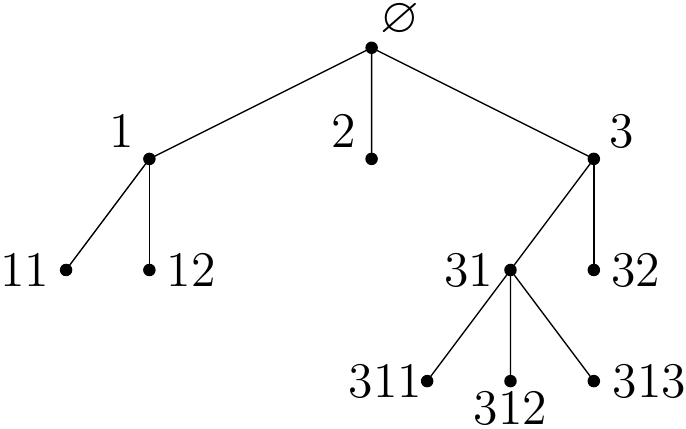}
\end{center}
\caption{The 3-ary tree $\{\varnothing, 1,2,3,11,12,31,32,311,312,313\}$.
Node 312 is the ``second child of the first child of the third child of the root'', its parent is node $31$, its siblings are $311$ and $313$. The last common ancestor of $32$ and $313$ is $3$.
}
\label{fig:UH}
\end{figure}


\medskip
We now define a probability distribution on the set of $m$-ary trees:
it is the distribution of ``split trees'' first introduced by Devroye~\cite{devroye-split}, 
but generalised to possibly infinite arity as in~\cite{Janson}.
Let $\nu$ be a probability distribution on the set
\begin{equation}\label{eqSig}
	\Sigma_m = \Big \{(v_1, \ldots, v_m) \in [0,1]^m \colon \sum_{i=1}^m v_i = 1 \Big \},
\end{equation}
and $(\bs Y(w))_{w\in\mathcal D_m}$ 
be a family of i.i.d.\ $\nu$-distributed random vectors.
For each node $w=w_1\cdots w_\ell\in \mathcal D_m$, 
we let $Z_w = Y_{w_\ell}(\parent w)$, 
where $\parent w$ is the parent of $w$, i.e.\ $\parent w = w_1\cdots w_{\ell-1}$ and with $Y_{w_\ell}(\parent w)$ denoting the $w_\ell$-th coordinate of the vector $\bs Y(\parent w)$ (see Figure~\ref{fig:split_tree} for an example: $\bs Y(3) = (.1,.4,.5)$ and thus $Z_{32}=.4$).

We also let $(X_n)_{n\geq 0}$ be a sequence of i.i.d.\ random variables 
uniformly distributed on $[0,1]$, and independent 
from the sequence $(\bs Y(w))_{w\in\mathcal D_m}$.

We need one last definition to define our sequence of random split trees: 
Given a tree $t$, we denote by $\partial t$ the nodes of $\mathcal D_m$ that are not in $t$ but whose parent is in $t$, and we call the elements of this set the ``leaves'' of $t$. It is not hard to see that if $t$ has $n$ nodes, then $\partial t$ has cardinality $(m-1)n+1$ (see Figure~\ref{fig:split_tree}). 

We can now define the sequence $(\tau_n)_{n\geq 1}$ of random trees
recursively as follows.
\begin{itemize}
\item the tree $\tau_1$ is defined to consist of the root only, i.e.\ $\tau_1 = \{\varnothing\}$.
\item for $n\geq 1$ arbitrary, given $\tau_n$, we define $\tau_{n+1}$ as the tree obtained by adding one node to $\tau_n$ as follows: 
\begin{itemize}
\item 
We subdivide the interval $[0,1]$ in subintervals indexed by $\partial\tau_n$ 
of respective lengths $\prod_{\varnothing\neq v\preccurlyeq w} Z_v$, 
for all $w\in\partial\tau_n$.
(Note that, by definition, $\sum_{w\in\partial\tau_n}\prod_{\varnothing\neq v\preccurlyeq w} Z_v = 1;$ see Figure~\ref{fig:split_tree} for an example, and observe that some points form part of several intervals.)
\item We set $\xi(n+1) = w$ if $X_{n+1}\in [0,1]$ belongs to the part indexed by $w$ of this partition of $[0,1]$, and finally set $\tau_{n+1} = \tau_n \cup\{\xi(n+1)\};$  note that this is well-defined almost surely.
\end{itemize}
\end{itemize}
The sequence of random trees $(\tau_n)_{n\geq 1}$ is called the {\em random split tree of split distribution~$\nu$} (which we recall is the distribution of the $\bs Y(w)$'s).

\begin{figure}
\begin{center}
\includegraphics[width=5cm]{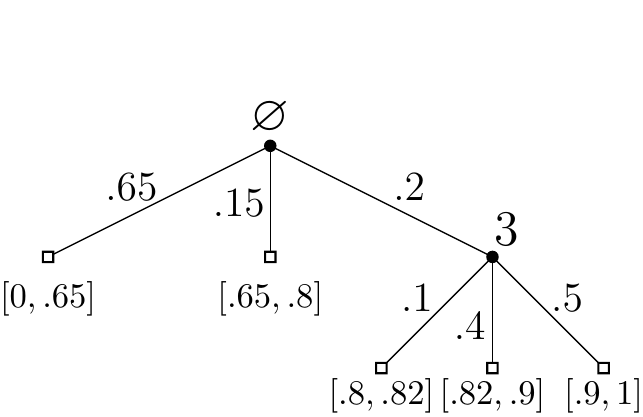}
\end{center}
\caption{A realisation of the 3-ary split tree $\tau_2$, here we have $\tau_2 = \{\varnothing,3\}$. 
The labels on the edges represent the values of $(\bs Y(w))_{w\in\tau_2}$: 
for example, $\bs Y(\varnothing) = (.65,.15,.2)$. 
The value of $Z_w$ is thus the label on the edge from~$w$ to its parent: 
for example, $Z_{31}=.1$. 
The nodes that are marked by a square are the elements of $\partial \tau_2$, 
underneath each leaf is written the corresponding part in the partition used to build $\tau_3$. 
For example, the part corresponding to $32$ is of length $Z_3Z_{32} =.2\times .4 = .08$.}
\label{fig:split_tree}
\end{figure}

This definition incorporates a variety of different random trees that are classical in the literature:
\begin{itemize}
\item If $m=2$ and $\nu$ is the distribution of $(Y, 1-Y)$, where $Y$ is uniform on $[0,1]$, then $(\tau_n)_{n\geq 1}$ is the {\it random binary search tree} (see~\cite[Table~1]{devroye-split}).
\item If $\nu$ is the uniform distribution on the simplex $\Sigma_m$ for $m$ finite, then $(\tau_n)_{n\geq 1}$ is the {\it random $m$-ary search tree} (see~\cite[Table~1]{devroye-split}).
\item If $m=\infty$ and $\nu$ is $\mathrm{GEM}(0,1)$ on $\Sigma_{\infty}$, then $(\tau_n)_{n\geq 1}$ is the {\it random recursive tree} (see~\cite[Cor.~1.2]{Janson}).
\item If $m=\infty$ and $\nu$ is $\mathrm{GEM}(\nicefrac12,\nicefrac12)$, then $(\tau_n)_{n\geq 1}$ is the {\it random preferential attachment tree} (see~\cite[Cor.~1.3]{Janson}).
\end{itemize}

\begin{remark}
For $\alpha\in [0,1]$ and $\theta>0$, the Griffiths-Engen-McCloskey distribution
$\mathrm{GEM}(\alpha, \theta)$ is defined as 
the distribution of the sequence $(A_n)_{n\geq 1}$ defined as follows:
sample $(B_i)_{i\geq 1}$ a sequence of independent random variables 
of respective distributions~$\mathrm{Beta}(1-\alpha, \theta+i\alpha)$,
and, for all $n\geq 1$, set $A_n = B_n\prod_{i=1}^{n-1}(1-B_i)$.
\end{remark}

\subsection{Voronoi cells and final territories}\label{sub:results}
In this paper, our aim is to investigate the sizes of the Voronoi cells corresponding to~$k$ nodes taken uniformly at random in the $n$-node random split tree $\tau_n$ defined in Section~\ref{sub:def_split}. In this context, we will also accommodate for the setting of  having random edge lengths between the nodes: 
let $\varpi$ be a probability distribution on $(0,\infty)$ and
let $(L_w)_{w\in\mathcal D_m}$ 
be a sequence of i.i.d.\ random variables of distribution~$\varpi$,
and we define the distance between two nodes as the sum of the length of the edges on the unique shortest path between them; see Figure~\ref{fig:distance} for an example.
\begin{definition}\label{def:Distances}
For all families $\bs \ell = (\ell_w)_{w\in\mathcal D_m}$ of positive random variables we define a distance $d_{\bs \ell}$ on $\mathcal D_m$ as follows: 
for all pairs of nodes $u$ and $v$ in $\mathcal D_m$ (for all $m\geq 2$),
let
\[d_{\bs \ell}(u,v) 
:= \sum_{u\wedge v\prec w\preccurlyeq u} \ell_w
+  \sum_{u\wedge v\prec w\preccurlyeq v} \ell_w.\]
For all nodes $w\in\mathcal D_m$, we denote by $|w|_{\bs \ell}:=d_{\bs\ell}(\varnothing, w)$.
\end{definition}
This definition holds for any fixed sequence $\bs \ell$ of edge lengths: all along the paper, we use the distance $d_{\bs L}$, where $\bs L = (L_w)_{w\in\mathcal D_m}$ is the sequence of i.i.d.\ random edge lengths.
\begin{figure}
\begin{center}
\includegraphics[width=4.5cm]{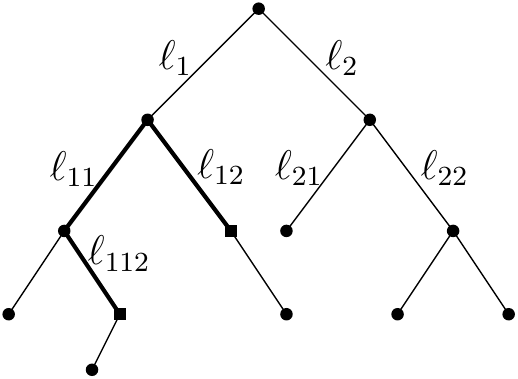}
\end{center}
\caption{A binary tree. The distance between the nodes $112$ and $12$ (marked as squares on the picture) with respect to the sequence $\bs \ell$ is $\ell_{11}+\ell_{112}+\ell_{12}$ (the sum of the length of the bold edges) because their last common ancestor is $1$.}
\label{fig:distance}
\end{figure}
Also, note that if $\ell_w=1$ for all $w\in\mathcal D_m$, then $d_{\bs \ell}$ corresponds to the graph distance in the graph whose nodes are all elements of $\mathcal D_m,$ and where there is an edge between two nodes if and only if one is the parent of the other.

\begin{definition}\label{def:Vor}
Let $u_1, \ldots, u_k$ be $k$ nodes in an $m$-ary tree $t$, and $d$ a distance on $\mathcal D_m$.
We define the Voronoi cells of $u_1, \ldots, u_k$ as follows: for all $1\leq i\leq k$,
\begin{align*}
\Vor_{t,d}^{i}(u_1, \ldots, u_k) 
&=\big\{w\in t \colon d(w,u_i)\le d(w,u_j)\text{ for }j=1,\dots,i-1\text{ and } \\ &\qquad d(w,u_i)<d(w,u_j)\text{ for }j=i+1,\dots,n \big \}.
\end{align*}
We say that $\Vor_{t,d}^i(u_1, \ldots, u_k)$ is the Voronoi cell of $u_i$ 
(with respect to $u_1, \ldots, u_k$).
\end{definition}

\begin{remark}
The idea of Definition~\ref{def:Vor} is that $\Vor_{t,d}^i(u_1, \ldots, u_k)$ contains all the nodes that are closer to $u_i$ than to any of the other $u_j$'s for distance $d$ on $t$.
The difference between `$<$' and `$\le$' induces a simple rule to break ties (in case of equal distances, the vertex with smaller index is preferred). 
However, since the number of boundary vertices is of constant order, the choice we make about how to break ties has no impact on our results.
\end{remark}


A possible interpretation of Voronoi cells is in terms of epidemics: imagine that $k$ competing epidemics start spreading at speed one from, respectively, $u_1, \ldots, u_k$, 
and that once a node is infected by an epidemic, then it becomes immune to all others. 
If two or more epidemics reach one node at the same time, then the node gets infected with the epidemics that started at the $u_i$ with smallest index.
In this context, the Voronoi cells are the final territories of the $k$ infections, that is, the Voronoi cell of $u_i$ contains all the nodes that got infected by the epidemic that started at node $u_i$. From this point of view, it is natural to consider the case when the epidemics spread at different speeds:

\begin{definition}
Let $t$ be an $m$-ary tree and denote by $d$ be a distance on $\mathcal D_m.$ Furthermore, let $u_1, \ldots, u_k$ be nodes in $t$ and let $s_1, \ldots, s_k\in (0,+\infty),$ the \lq speeds of the epidemics\rq.
We define the final territories of $(u_1,s_1), \ldots, (u_k, s_k)$ as
\begin{align*}
\Ter_{t,d}^i((u_1, s_1), \ldots, (u_k, s_k))
&=\Big\{w\in t \colon d(w,u_i) \le \frac{s_i+s_j}{s_i}\,d(w,u_j) \text{ for }j=1,\dots,i-1\text{ and } \\
& \qquad(w,u_i)< \frac{s_i+s_j}{s_i}\,d(w,u_j)
\text{ for }j=i+1,\dots,n\} \Big\}.
\end{align*}
\end{definition}

\subsubsection{Main results}
Our main result provides asymptotic statements on the sizes of $k$ epidemics in the case when the epidemics have different speeds. We first state the result in the simpler case when all epidemics have the same speed (Theorem \ref{th:voronoi}) and then extend it to the setting where different speeds are admissible (Theorem \ref{th:territories}). Both theorems apply to finite ($m\in\{2, 3, \ldots\}$) as well as infinite ($m=\infty$) arity. 
They hold under the following hypothesis on the split-vector distribution $\nu$ and the edge length distribution $\varpi$:
\begin{itemize}
\item[(A1)] 
(i) If $\mathrm{Supp}(\nu)$ denotes the support of the probability distribution $\nu$, and $\mathbf e_i$ is the $m$-dimensional vector whose coordinates are all equal to~0 except the $i$-th coordinate which is equal to one, then
\[\mathrm{Supp}(\nu)\setminus \{\mathbf e_1, \ldots, \mathbf e_m\}\neq \varnothing.
\]
(ii) Moreover, if $(Y_1, \ldots, Y_m)\sim \nu$, $U$ is a uniform random variable on $[0,1]$,
and\footnote{By convention, we set $\sum_{i=1}^0 a_i = 0$ for each sequence $(a_i)_{i\geq 0}$ of real numbers.}
\begin{equation}\label{eq:barY}
\bar Y = \sum_{i=1}^m Y_i\bs 1\Big\{\sum_{j=1}^{i-1} Y_j\leq U<\sum_{j=1}^{i}Y_j\Big\}
\end{equation}
is the size-biased version of the marginals of $\nu$,
then $\mu:=\mathbb E[\log \nicefrac{1}{\bar Y}]> 0$
and $\sigma^2:=\mathrm{Var}(\log \bar Y)<+\infty.$
\item[(A2)] If $L\sim\varpi$ then either $\mathrm{Var}(L) \in [0,+\infty)$, in which case we set $\alpha:=2$, or there exists $\alpha\in (1,2)$ and a function $\ell$ slowly varying at infinity, such that $\mathbb P(L\geq x) = x^{-\alpha}\ell(x)$. In particular, $\mathbb E L < \infty$ in this case.
\end{itemize}

Assumption (A1-i) just excludes the trivial case when the $n$-node split tree is almost surely equal to a line of $n$ nodes hanging under each other under the root. Assumptions (A1-ii) and (A2) give some control over the moments of respectively the split vectors and the edge lengths: these assumptions will be used when applying laws of large numbers and of the iterated logarithm, as well as central limit theorems to sum of independent copies of these random variables.

\begin{theorem}\label{th:voronoi}
Let $\varpi$ be a probability distribution on $(0,+\infty)$, and $\nu$ be a probability distribution on $\Sigma_m$. Let $(\tau_n)_{n\geq 1}$ be the random split tree of split distribution $\nu$, and $\bs L = (L_w)_{w\in\mathcal D_m}$ be a sequence of i.i.d.\ random variables 
of distribution $\varpi$, independent of $(\tau_n)_{n\geq 1}$.


For each $n\geq 1$, let $U_1(n), \ldots, U_k(n)$ be $k$ nodes taken uniformly at random among the $n$ nodes of $\tau_n$; we let $V_{\sss (1)}(n)\geq \ldots \geq V_{\sss (k)}(n)$ be the sizes of their Voronoi cells in $\tau_n$ with respect to the distance $d_{\bs L}$, ordered in decreasing order.

Under Assumptions {\rm (A1)} and {\rm (A2)}, we have in distribution when $n\to+\infty$,
\begin{equation} \label{eq:VorConvDist}
\frac1{(\log n)^{\nicefrac1\alpha}}\big(\log (V_{\sss (2)}(n)/n), \ldots, \log (V_{\sss (k)}(n)/n)\big) \Rightarrow
\frac{\frak v}{2\mathbb EL}
\big(\Psi_{(1)}-\Psi_{(2)}, \ldots, \Psi_{(1)}-\Psi_{(k)}\big);
\end{equation}
here, $\Psi_{(1)}\leq \cdots\leq \Psi_{(k)}$ is the order statistics of $k$ i.i.d.\ random variables whose distribution is 
$\mathcal N(0, \mathrm{Var}(L) + \sigma^2 (\mathbb EL)^2)$  if $\mathrm{Var}(L)<+\infty$, and an $\alpha$-stable distribution otherwise, and where
\begin{equation}\label{eq:def_frak_v}
\frak v = \begin{cases}
\mu^{-\nicefrac12} & \text{ if }\mathrm{Var}(L)<+\infty\\
\mu^{1-\nicefrac1\alpha} &\text{ otherwise.}
\end{cases}
\end{equation}
\end{theorem}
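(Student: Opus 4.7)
The strategy is to reduce the Voronoi-cell problem to two essentially separate sources of fluctuation: the \emph{global} weighted depths $|U_i|_{\bs L}$, and the \emph{fringe} behaviour of each ancestral spine. The key observation is that, with overwhelming probability, one of the $k$ chosen vertices, $U_{i^\star}$ with $i^\star=\argmin_i |U_i|_{\bs L}$, sits closer to the root than all others and therefore wins the competition on all but a vanishing fraction of $\tau_n$. For every other index $i$, I expect the cell $\Vor^i_{\tau_n,d_{\bs L}}(U_1,\ldots,U_k)$ to collapse onto the subtree hanging from the ancestor of $U_i$ at weighted depth approximately $(|U_i|_{\bs L}-|U_{i^\star}|_{\bs L})/2$: once $U_i\wedge U_j$ is close to the root for all pairs and also $U_j\wedge w$ is close to the root for typical $w$, the winning criterion $d_{\bs L}(w,U_i)<d_{\bs L}(w,U_j)$ simplifies to $|U_i\wedge w|_{\bs L}>(|U_i|_{\bs L}-|U_j|_{\bs L})/2$.

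I would first prove that for any $i\neq j$ the last common ancestor $U_i\wedge U_j$ has weighted depth $\op(\sqrt{\log n})$ (respectively $\op((\log n)^{1/\alpha})$ in the heavy-tailed regime), via the size-biased description of the path leading to a uniform node: two independent uniforms coincide at an ancestor of graph-depth $h$ only with probability exponentially small in $h$ under (A1), and the same comparison shows that, save for an $o(n)$ set of vertices $w$, $U_j\wedge w$ is similarly shallow. Proposition~\ref{prop:profile} then yields that $(|U_i|_{\bs L})_{1\le i\le k}$ is asymptotically a vector of i.i.d.\ variables with mean of order $\log n$ and Gaussian (or $\alpha$-stable) fluctuations of order $\sqrt{\log n}$ (or $(\log n)^{1/\alpha}$), the limiting variance $\mathrm{Var}(L)+\sigma^2(\mathbb EL)^2$ being obtained by summing two independent sources of noise along each spine (an edge-length CLT plus a CLT for the logarithms of the size-biased split products). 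Finally, Proposition~\ref{prop:fringe_trees} converts the ancestor depth into a cell size: the fringe tree at graph-depth $h\sim (|U_i|_{\bs L}-|U_{i^\star}|_{\bs L})/(2\mathbb EL)$ contains $n\exp(-\mu h(1+\op(1)))$ vertices, and taking logarithms delivers the announced convergence with factor $\frac{\frak v}{2\mathbb EL}$ after matching constants.

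The main difficulty lies in the uniformity of these approximations simultaneously over $i\in\{1,\ldots,k\}$ and over $w\in\tau_n$: the fringe-tree estimate must be applied not at a fixed ancestor but at one whose depth is itself random and of order $\sqrt{\log n}$, so Proposition~\ref{prop:fringe_trees} must be deployed in a form that allows depth sequences diverging with $n$. Equally delicate is the control of the boundary layer: the vertices $w$ for which $|U_i\wedge w|_{\bs L}$ sits in a neighbourhood of $(|U_i|_{\bs L}-|U_{i^\star}|_{\bs L})/2$ supply the only possible error, and one must argue via a second-moment or concentration estimate that they are negligible compared with the bulk $n\exp(-\mu h)$. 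In the heavy-tailed case $\alpha\in(1,2)$, the edge-length CLT is replaced by an $\alpha$-stable limit, and one has to verify that the largest edge on the relevant portion of the spine does not by itself determine the outcome; assumption (A2) with $\mathbb EL<\infty$ is precisely what keeps the first-order weighted depth on the scale $\log n$ and forces the competition to be decided by the typical spine rather than by a single exceptional edge.
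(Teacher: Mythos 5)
Your proposal is correct and follows essentially the same route as the paper's proof: reduce to the vertex of minimal weighted depth, identify each losing cell (on a high-probability event) with the subtree rooted at the ancestor of $U_i$ at weighted depth about half the depth difference, feed the random, $\sqrt{\log n}$-order threshold into the uniform-in-$x$ fringe-tree estimate of Proposition~\ref{prop:fringe_trees}, and combine with the joint CLT/stable limit for weighted depths and the shallowness of the last common ancestors (Lemma~\ref{lem:LCA}). The only cosmetic difference is that the joint fluctuation result you attribute to Proposition~\ref{prop:profile} is in the paper obtained directly as Lemmas~\ref{lem:real_heights_finite_var} and~\ref{lem:real_height_heavy_tails} (height CLT plus independent edge-length sums along the spines), exactly as you describe, and the ``boundary layer'' you worry about disappears because on the paper's event $\mathcal E_n$ each losing cell is \emph{exactly} a subtree, so no second-moment control is needed.
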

In words, the above amounts to the fact that the second, third, \dots, $k$th largest component each occupies a proportion of roughly $\exp\{-\Psi(\log n)^{1/\alpha}\}$ of the vertices, where $\Psi$ is some explicit positive random variable. This implies that asymptotically and in distribution, the entire mass is allocated to the largest component (which, by construction, belongs to the vertex closest to the root). The allocation for split trees is therefore qualitatively very different from the allocation in the universality class of the continuum random tree, where the limit of the proportions of the masses is known to be uniform \cite{Chapuy}.

We now extend the results of the previous theorem to the case of different speeds at which the uniformly chosen vertices claim territory (use the same notation as in Theorem~\ref{th:voronoi}). 

\begin{theorem}\label{th:territories}
For all $s_1, \ldots, s_k\in (0,+\infty)$, we let $W_{(1)}(n)\geq  \ldots \geq W_{(k)}(n)$ be the sizes (ordered in decreasing order) of the final territories in $\tau_n$, equipped with the distance $d_{\bs L}$, of $k$ epidemics of respective speeds $s_1, \ldots, s_k$ and starting from $U_1(n), \ldots, U_k(n),$ respectively. 
Without loss of generality, we assume that $s_1 = \ldots = s_j > s_{j+1} \geq \ldots\geq s_k$ for some $j\in\{1,\dots,k\}$. 

Then, under Assumptions {\rm (A1)} and {\rm (A2)}, we have in distribution when $n\to+\infty$,
\begin{equation} \label{eq:terPrincipalConv}
\frac1{(\log n)^{\nicefrac1\alpha}}\big(\log (W_{(2)}(n)/n), \ldots, \log (W_{(j)}(n)/n)\big) \Rightarrow
\frac{\frak v}{2\mathbb EL}
\big(\Psi_{(1)}-\Psi_{(2)}, \ldots, \Psi_{(1)}-\Psi_{(j)}\big),
\end{equation}
where $\frak v$ is defined in~\eqref{eq:def_frak_v},
$\Psi_{(1)}\leq \cdots\leq \Psi_{(j)}$ is the order statistics of $j$ i.i.d.\ random variables whose distribution is 
$\mathcal N(0, \mathrm{Var}(L) + \sigma^2 (\mathbb EL)^2)$  if $\mathrm{Var}(L)<+\infty$, and an $\alpha$-stable distribution otherwise.

Furthermore, if $\mathbb EL^2<+\infty$, then, for all $i \in \{j+1, \ldots, k\}$,
\[\frac{\log(W_{(i)}(n)/n) + \frac{s_1-s_i}{s_1+s_i}\log n}{\sqrt{\frac{s_1-s_i}{s_1+s_i}\log n}}
\Rightarrow \mathcal N\bigg(0, \frac{\mathrm{Var}(\log \bar Y)\mathbb EL}{\mathbb E[\log\bar Y]^2}+\mathrm{Var}(L)\bigg).\]
Otherwise, if $\mathbb P(L\geq x) = x^{-\alpha}\ell(x)$ for some function $\ell$ slowly varying at infinity and some $\alpha\in(1,2)$, then, for all $i \in \{j+1, \ldots, k\}$,
\[\frac{\log (W_{(i)}(n)/n)+\frac{s_1-s_i}{s_1+s_i} \log n}{(\frac{s_1-s_i}{s_1+s_i} \log n)^{\nicefrac1\alpha}}
\Rightarrow \frac{\mu^{1-\nicefrac1\alpha}}{\mathbb E L}\Upsilon(\alpha),
\]
where $\Upsilon(\alpha)$ is an $\alpha$-stable distribution.
In particular, in both cases, we have
\begin{equation} \label{eq:terMinorConv}
\frac{\log(W_{(i)}(n)/n)}{\log n} \to \frac{s_i-s_1}{s_i+s_1} 
\end{equation}
in probability when $n\to+\infty$,
for each $i \in \{j+1, \ldots, k\}$.
\end{theorem}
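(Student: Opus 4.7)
The plan is to split the analysis according to which of the two groups an epidemic belongs to. For the fast epidemics $i\in\{1,\dots,j\}$, we first establish (via the second part of the theorem, proved independently) that the slow epidemics $u_{j+1},\dots,u_k$ each occupy territories of size $o(n)$, so they do not interfere to leading order with the competition among $u_1,\dots,u_j$. Consequently $\Ter^i$ coincides, up to a boundary contribution of negligible order, with the Voronoi cell $\Vor^i(u_1,\dots,u_j)$ computed among only the $j$ equal-speed leaders, and~\eqref{eq:terPrincipalConv} then follows directly from Theorem~\ref{th:voronoi} applied to this reduced configuration.

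For $i>j$, the binding competitor of $u_i$ is the fastest epidemic $u_1$: any other $u_{i'}$ with $i'\le j$ gives an equivalent inequality to the one coming from $u_1$, whereas any other slow $u_{i'}$ gives a strictly weaker one. Writing the defining condition of $\Ter^i$ in terms of heights, and using that $w\wedge u_1$ sits at height $O(1)$ with high probability (a consequence of $u_1$ being essentially independent of a generic vertex, which can be extracted from Proposition~\ref{prop:fringe_trees}), this condition can be rearranged, to leading order, into a joint constraint on the depth $\ell_i$ of $w\wedge u_i$ and on $|w|_{\bs L}$. Substituting the asymptotics $|u_i|_{\bs L},|u_1|_{\bs L}\approx \mathbb{E} L\cdot \mu^{-1}\log n$ delivered by Proposition~\ref{prop:profile}, the constraint translates into $\ell_i$ exceeding a threshold of the form $\tfrac{s_1-s_i}{s_1+s_i}\mu^{-1}\log n$, modulo fluctuations of the order announced in the theorem.

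The size of $\Ter^i$ is then obtained by summing, over all ancestors of $u_i$ at depth above this threshold, the sizes of the sibling subtrees they support. Proposition~\ref{prop:fringe_trees} ensures that the subtree rooted at $u_i$'s ancestor at depth $\ell$ has log-size $\log n-\mu\ell$ up to fluctuations of order $\sqrt{\ell}$, and a careful count of the dominant contributions produces the leading order $\log(W_{(i)}(n)/n)\approx -\tfrac{s_1-s_i}{s_1+s_i}\log n$, which immediately yields~\eqref{eq:terMinorConv}. For the Gaussian (resp.\ stable) limit, the fluctuations of $\log(W_{(i)}(n)/n)$ around this mean decompose into three essentially independent contributions: those of the log-split random walk along the ancestral line of $u_i$ (by the CLT, of variance $\sigma^2$ per step), those of the edge-length sum along the same line (of order $\sqrt{\log n}$ with variance $\operatorname{Var}(L)$ when $\mathbb{E} L^2<\infty$, resp.\ of order $(\log n)^{1/\alpha}$ with an $\alpha$-stable scaling in the heavy-tailed case), and the fluctuation $|u_i|_{\bs L}-|u_1|_{\bs L}$. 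Summing these yields the announced Gaussian or $\alpha$-stable limit with the stated variance constant.

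The main technical obstacle is the transition region around the critical depth $\ell_i^{\ast}:=\tfrac{s_1-s_i}{s_1+s_i}\mu^{-1}\log n$, since an $O(1)$ shift of $\ell_i^{\ast}$ produces an $O(n)$-multiplicative shift in $|\Ter^i|$; the fluctuations must therefore be tracked very precisely. This requires a uniform version of Proposition~\ref{prop:fringe_trees} along the whole ancestral line of $u_i$, together with a proof that the log-split random walk, the edge-length sum, and the fluctuations of $|u_1|_{\bs L}$ are asymptotically independent. A secondary difficulty is the patching between the two regimes: one must verify that the error made by treating the slow epidemics as negligible in the competition among fast epidemics is genuinely smaller than the $\exp\{-\Psi(\log n)^{1/\alpha}\}$-scale fluctuations driving~\eqref{eq:terPrincipalConv}.
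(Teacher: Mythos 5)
Your overall strategy is the same as the paper's: reduce \eqref{eq:terPrincipalConv} to Theorem~\ref{th:voronoi} once the slow territories are shown to be polynomially small, and represent the territory of a slow epidemic $i>j$ as the fringe subtree rooted at the ancestor of $U_i(n)$ whose $\bs L$-distance to the root is about $\frac{s_1-s_i}{s_1+s_i}\cdot\frac{\mathbb EL}{\mu}\log n$, then appeal to fringe-tree asymptotics. The reduction step and the leading-order statement \eqref{eq:terMinorConv} are in line with the paper. However, the part of your argument that is supposed to deliver the two displayed CLTs for $i>j$ has a genuine gap: the crucial ingredient is a central limit theorem (not just a law of large numbers) for $\log\big(D_n^{\sss(i)}(x_nf)/n\big)$ when the threshold $x_n$ is \emph{random} and only converges in probability -- this is precisely the paper's Lemma~\ref{lem:CLT_random_index}, proved via a random-index CLT combined with the concentration estimate \eqref{eq:3rd} from the proof of Proposition~\ref{prop:fringe_trees}. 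You correctly flag that ``a uniform version of Proposition~\ref{prop:fringe_trees} along the whole ancestral line'' is needed, but you never supply it, and Proposition~\ref{prop:fringe_trees} itself (a weak LLN, with normalisation $xf(n)$) cannot give fluctuations of order $\sqrt{\log n}$ or $(\log n)^{1/\alpha}$; so the centrepiece of this half of the theorem is assumed rather than proved.

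Moreover, your fluctuation bookkeeping does not match what is actually being claimed. You declare that the binding competitor of $u_i$ is $u_1$ and that any other fast $u_{i'}$ gives ``an equivalent inequality''; this is true only at leading order. Among the fast group the quantities $|U_\ell(n)|_{\bs L}$, $\ell\le j$, differ by exactly the order $\sqrt{\log n}$ (resp.\ $(\log n)^{1/\alpha}$) at which you want the CLT, so at this precision the relevant neighbour must be identified as the fast vertex closest to the root, $\kappa=\argmin_{\ell\le j}|U_\ell(n)|_{\bs L}$ (the paper does this on the event $\mathcal A_n$, using Lemma~\ref{lem:LCA} -- incidentally, the $O_{\mathbb P}(1)$ height of last common ancestors comes from Lemma~\ref{lem:LCA}, not from Proposition~\ref{prop:fringe_trees}). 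Relatedly, you list ``the fluctuation $|u_i|_{\bs L}-|u_1|_{\bs L}$'' as a third, independent contribution and assert that summing the three contributions ``yields the announced variance constant''; but the stated limit variance $\frac{\mathrm{Var}(\log\bar Y)\,\mathbb EL}{\mathbb E[\log\bar Y]^2}+\mathrm{Var}(L)$ is exactly the fringe-tree constant of Lemma~\ref{lem:CLT_random_index}, with the random threshold entering only through its in-probability limit $x^{\sss(i)}$; adding an extra same-order term built from $|u_i|_{\bs L}-|u_1|_{\bs L}$ (with $u_1$ in place of the minimiser $u_\kappa$, and without any proof of the claimed asymptotic independence -- note the edge lengths on the root-to-threshold segment are part of $|u_i|_{\bs L}$, so independence is not automatic) would produce a different limit law than the one in the statement. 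As it stands, your sketch would therefore not reproduce the theorem's centring/variance constants without substantial additional work, which is exactly the content of Section~\ref{sec:territories} of the paper.
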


Note that, given that the slower epidemics all have very small territories (cf.~\eqref{eq:terMinorConv}), 
the $j$ fastest territories behave as in~Theorem~\ref{th:voronoi}, which -- at least heuristically -- entails~\eqref{eq:terPrincipalConv}.

It is also interesting to note that in their first asymptotic order given by~\eqref{eq:terMinorConv}, 
the sizes of the slow epidemics do not depend on the edge length. 
An intuitive indication towards this fact is that replacing $L$ by $cL$ for a positive constant $c$ does not change the sizes of the territories.  
In a similar vein, the right-hand sides of~\eqref{eq:VorConvDist} and \eqref{eq:terPrincipalConv} also remain unchanged upon replacing $L$ by $cL$, as expected.

As a by-product of our proof of Theorems~\ref{th:voronoi} and~\ref{th:territories}, we get the following result on the convergence of the profile of random split trees,
which, as far as we are aware, is a new result in the context of split trees:
\begin{proposition}\label{prop:profile}
Let $(\tau_n)_{n\geq 1}$ be the random split tree of split distribution $\nu$, and let, for all integer $n$, $\pi_n = \frac1n\sum_{i=1}^n \delta_{|\nu_i|}$ be the random profile of $\tau_n$, 
where we recall that $|\nu_i|$ is the height of the node inserted at time $i$ in $(\tau_n)_{n\geq 1}$.
If $\nu$ satisfies Assumption {\rm (A1)}, then
\begin{equation}\label{eq:cv_profile}
\pi_n\big(\,\cdot\,\sqrt{(\log n)/\mu^3}+ (\log n)/\mu\big) \to \pi_{\infty}=\mathcal N(0,1),
\end{equation}
in probability as $n\to+\infty$, on the space of probability measures on $\mathbb R$ equipped with the topology of weak convergence.
\end{proposition}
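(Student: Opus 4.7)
The plan is to prove convergence in probability via a pointwise second-moment argument on the cumulative distribution function. Fix $x\in\mathbb R$ and set
\[
F_n(x):=\pi_n\bigl((-\infty,\,x\sqrt{(\log n)/\mu^3}+(\log n)/\mu]\bigr).
\]
I will establish $F_n(x)\to\Phi(x)$ in $L^2$, with $\Phi$ the standard normal CDF. Combined with monotonicity of $F_n$ in $x$ and continuity of $\Phi$ (via a countable dense set of test points and a union-bound), this yields the claimed convergence of $\pi_n$ in probability in the weak topology. The task thus reduces to showing $\mathbb E[F_n(x)]\to\Phi(x)$ and $\mathrm{Var}(F_n(x))\to 0$.

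\textbf{First moment via a tagged path.} By exchangeability, $\mathbb E[F_n(x)]=\mathbb P(|U_n|\le x\sqrt{(\log n)/\mu^3}+(\log n)/\mu)$, where $U_n$ is a uniform node of $\tau_n$. To analyse $|U_n|$, I sample an independent $\tilde X$ uniformly on $[0,1]$ and read off its infinite address $w_1w_2\cdots$ through the nested subintervals of the construction in Section~\ref{sub:def_split}. Uniformity of $\tilde X$ forces the letter chosen at each step to be a size-biased pick from the coordinates of $\bs Y(w_1\cdots w_{k-1})$, so along this path the ratios $(Z_{w_1\cdots w_k})_{k\ge 1}$ are i.i.d.\ copies of $\bar Y$ as defined in~\eqref{eq:barY}. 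Setting $T_k:=\sum_{i=1}^k\log(1/Z_{w_1\cdots w_i})$, Assumption (A1) gives $\mathbb E[T_k]=k\mu$ and $\mathrm{Var}(T_k)=k\sigma^2$, so the classical CLT yields $(T_k-k\mu)/\sqrt{k}\Rightarrow\mathcal N(0,\sigma^2)$. Since $U_n=\nu_J$ with $J$ uniform on $\{1,\dots,n\}$ and a typical such $J$ equals $\Theta(n)$, and since $\nu_J$ is the leaf of $\tau_{J-1}$ containing $X_J$ (an interval of length $\asymp 1/n$), I would identify $|U_n|=\inf\{k:T_k\ge\log n\}+\Op(1)$. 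Inverting the CLT at the barrier $\log n$ (a standard Anscombe-type computation) then yields a Gaussian limit for $(|U_n|-(\log n)/\mu)/\sqrt{\log n}$, giving $\mathbb E[F_n(x)]\to\Phi(x)$ after matching variances.

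\textbf{Second moment via asymptotic independence.} For the variance, sample two independent uniform nodes $U_n,U_n'$ of $\tau_n$ generated by two independent uniforms $\tilde X,\tilde X'$. Their tagged paths coincide up to a random depth $D$ and then branch into disjoint subtrees, where the associated random walks $T_k,T_k'$ become independent. Assumption (A1-i) guarantees that at each depth the split vector has a positive chance of separating the two paths, so $D$ is stochastically dominated by a geometric variable and is tight. Consequently, after centering by $(\log n)/\mu$ and rescaling by $\sqrt{\log n}$, the pair $(|U_n|,|U_n'|)$ is asymptotically equal in law to two independent copies of the above Gaussian limit; hence $\mathbb E[F_n(x)^2]\to\Phi(x)^2$ and $\mathrm{Var}(F_n(x))\to0$.

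\textbf{Main obstacle.} The delicate step is the $\Op(1)$ identification of $|U_n|$ with $\inf\{k:T_k\ge\log n\}$, because the event ``$w_1\cdots w_k\in\partial\tau_{J-1}$'' depends on the whole configuration of $X_1,\dots,X_{J-1}$ along the ancestral path, not just on the subinterval length $e^{-T_k}$. I would handle this by combining the law of the iterated logarithm for $T_k$ (valid under (A1)) with a union bound over levels in a window of width $O(\sqrt{\log n})$ around $(\log n)/\mu$ to exclude atypical $T_k$, and then, conditional on the subinterval length being $\asymp 1/n$, by a Chernoff-type estimate for the number of prior points $X_i$ that fall into the relevant ancestral subintervals. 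This pins down the correct cut-off depth up to a tight additive correction, which is negligible on the $\sqrt{\log n}$ scale. A secondary point is to discard the small-$J$ contribution to $\mathbb E[F_n(x)]$, but the fraction of such $J$'s is $o(1)$ and therefore affects $F_n(x)$ negligibly.
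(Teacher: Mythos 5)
Your strategy is sound, but it takes a genuinely different and much more self-contained route than the paper. The paper's proof of Proposition~\ref{prop:profile} is a two-line reduction: it invokes a criterion from \cite{MM17} (convergence in probability of random measures follows from joint convergence in distribution of two points sampled conditionally independently from $\pi_n$) together with Lemma~\ref{lem:height_unif} for $k=2$; in turn, the marginal CLT in Lemma~\ref{lem:height_unif} is simply quoted from Devroye's Theorem~2 for the depth of $\xi(n)$ (plus $\log k_n=\log n+\Op(1)$), and the asymptotic independence comes from Lemma~\ref{lem:LCA} on the last common ancestor together with the recursive structure of split trees. Your second-moment computation on the CDF $F_n(x)$ is essentially a hands-on proof of the \cite{MM17} two-point criterion in this special case, and your tagged-path/first-passage argument re-derives Devroye's theorem from scratch: the size-biased i.i.d.\ structure of $(Z_u)$ along the ray is exactly the one used in the paper (cf.~\eqref{eq:LLN2}), and your geometric bound on the separation depth of two rays is the analogue of the $\beta>0$ computation in Lemma~\ref{lem:LCA}. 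What your route buys is independence from the cited literature; what it costs is that the hardest step — identifying $|U_n|$ with $\inf\{k\colon T_k\ge\log n\}$ up to a negligible error — is precisely the content of Devroye's result and is only sketched. Two caveats there: (a) counting prior points in the ancestral intervals is not by itself sufficient, because existence of the depth-$k$ ray node requires points arriving in increasing-depth temporal order (equivalently, the depth along the ray is a Markov chain whose level-$j$ waiting time is geometric with mean $e^{T_j}$, given the split variables), so your Chernoff step should be run on these waiting times or on a time-blocked version of the counting argument; and (b) the advertised $\Op(1)$ precision is more than you need and harder than your tools readily give — an error $o_{\mathbb P}(\sqrt{\log n})$ (e.g.\ $O_{\mathbb P}(\log\log n)$ from a union bound over levels) already suffices on the $\sqrt{\log n}$ fluctuation scale. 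With these points repaired, your argument goes through and matches the paper's conclusion (note that, consistently with Lemma~\ref{lem:height_unif}, the limit law carries variance $\sigma^2$ under the normalisation $\sqrt{(\log n)/\mu^3}$).
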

Stronger results are already known for certain cases of split trees: 
in particular, it is known that~\eqref{eq:cv_profile} holds almost surely 
in the case of the binary search tree \cite{CDJ01}, 
the random recursive tree \cite{MM17}, 
and the preferential attachment tree \cite{Katona05}.
The profile of the uniform random tree 
(considered by~\cite{Chapuy} in the context of Voronoi cells)
converges in distribution to the local time of a Brownian excursion (see~\cite{DG}).

\begin{remark}\label{rk:quenched_vs_annealed}
Note that Theorem~\ref{th:voronoi} holds in an averaged 
sense (or with respect to the joint law). One could imagine two quenched versions by (i) conditioning on the random split tree $(\tau_n)_{n\geq 1}$ or (ii) conditioning additionally also on the sequence of edge lengths $\bs L$. Since, in our proof, we use the central limit theorem for the sequence $\bs L$, our current methods do not provide with a possible version of Theorem~\ref{th:voronoi} quenched with respect to $\bs L$.
However, for the split distributions $\nu$ for which~\eqref{eq:cv_profile} holds almost surely, Theorem~\ref{th:voronoi} would hold almost surely given $(\tau_n)_{n\geq 1}$.
\end{remark}

The remainder of the paper is organised as follows. In Section \ref{sec:voronoi}, we establish a central limit theorem for the joint law of the height of uniform vertices and derive Proposition \ref{prop:profile}. Furthermore, we proof Theorem \ref{th:voronoi}. In Section \ref{sec:territories}, we extend these arguments to the case of different speeds thereby proving Theorem \ref{th:territories}.

\section{Proof of Theorem~\ref{th:voronoi}}\label{sec:voronoi}
In this section, we use the same notation, and place ourselves under the assumptions of Theorem~\ref{th:voronoi}. The idea of the proof is as follows: if $\varpi = \delta_1$ (i.e.\ all edge lengths are equal to~1 almost surely, i.e.\ $\bs L \equiv 1$) then among the nodes $U_1(n), \ldots, U_k(n)$, the one closest to the root belongs to the Voronoi cell containing the root, and this Voronoi cell typically is the largest of all Voronoi cells. 
As a consequence, it is important 
to understand the heights of $k$ uniform nodes in a random split tree. 
Recall that for a graph node $v\in\tau$, we write $|v|$ for that graph distance between the root $\varnothing$ and $v.$
\begin{lemma}[CLT for heights of uniform vertices]\label{lem:height_unif}
Let $k\in\mathbb N$  and $\bar Y$ be distributed as the size-biased version of the marginals of $\nu$ (see \eqref{eq:barY}), and denote $\mu = \mathbb E[\log (\nicefrac1{\bar Y})]$ as well as $\sigma^2 = \mathrm{Var}(\log \bar Y).$ 

Then, in distribution as $n\to+\infty$, we have 
\[\left(\frac{|U_1(n)| - (\log n)/\mu}{\sqrt{(\log n)/\mu^3}},\ldots,
\frac{|U_k(n)| - (\log n)/\mu}{\sqrt{(\log n)/\mu^3}}
\right)\Rightarrow (\Lambda_1, \ldots, \Lambda_k),
\]
where the $\Lambda_1, \ldots, \Lambda_k$ are independent centred Gaussian random variables of variance $\sigma^2.$
\end{lemma}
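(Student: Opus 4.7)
The plan is to represent each uniform vertex's height by the first-passage time of an i.i.d.\ random walk above the deterministic level $\log n$, and to use the fact that $k$ such walks become independent past the pairwise last common ancestor depth, which is bounded in probability. The key observation is that a uniform node $U(n)\in\tau_n$ is just $\xi(I)$ for $I$ uniform on $\{1,\ldots,n\}$, and that $\xi(i)$ is the first node on the infinite path in $\mathcal D_m$ driven by the uniform $X_i$ (following the splits $\bs Y(w)$) which does not lie in $\tau_{i-1}$. A standard disintegration shows that along any such infinite path, the sizes $(\bar Y_h)_{h\ge 1}$ of the successively chosen children form an i.i.d.\ sequence distributed as in~\eqref{eq:barY}: conditionally on the split $(Y_1,\ldots,Y_m)\sim\nu$ and on choosing child $j$, the rescaled position is uniform on $[0,1]$ and independent of the split. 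Set $S_h:=\sum_{i=1}^h\log(1/\bar Y_i)$, a random walk of step mean $\mu$ and variance $\sigma^2$.

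To deduce the one-dimensional CLT, I would show that $|\xi(n+1)|=N(\log n)+\Op(\log\log n)$, where $N(x):=\inf\{h:S_h>x\}$. A short induction on the number of $X_i$'s landing in the interval $I_v$ of mass $M_v=e^{-S_{|v|}}$ associated to $v$ gives $v\in\tau_n$ if and only if at least $|v|$ of $X_1,\ldots,X_n$ lie in $I_v$. Therefore $|\xi(n+1)|$ is the first $h$ such that a $\mathrm{Bin}(n,M_{v_h})$ variable falls below $h$, and Chernoff tail estimates for these Binomials, combined with $\log h=O(\log\log n)$ for the typical $h\sim(\log n)/\mu$, give the claimed approximation. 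The classical CLT for renewal counting processes then yields
\[\frac{|\xi(n+1)|-(\log n)/\mu}{\sqrt{(\log n)/\mu^3}}\Rightarrow\mathcal N(0,\sigma^2),\]
and replacing $n+1$ by uniform $I\in\{1,\ldots,n\}$ only shifts $\log n$ by $\log(I/n)=\Op(1)$, which is negligible on the scale $\sqrt{\log n}$.

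For the joint law, realise $U_a(n)=\xi(I_a)$ with i.i.d.\ uniforms $I_1,\ldots,I_k$ on $\{1,\ldots,n\}$, which are distinct with probability $1-O(k^2/n)$; the driving uniforms $X_{I_1},\ldots,X_{I_k}$ are then i.i.d.\ on $[0,1]$. Their $k$ paths share a common prefix up to the pairwise last common ancestor depth $J_{ab}$ and then descend into disjoint subtrees of splits, hence are independent past $J_{ab}$. Iterating the identity $\mathbb P(\text{same child}\mid (Y_1,\ldots,Y_m))=\sum_i Y_i^2$ for two independent uniforms gives $\mathbb P(J_{ab}\ge h)=(\mathbb E[\sum_i Y_i^2])^h$; under Assumption~(A1-i) the distribution $\nu$ is not concentrated on the unit vectors, so $\mathbb E[\sum_i Y_i^2]<1$ and hence $J_{ab}=\Op(1)$. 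The common prefix contributes only $\Op(1)$ to each walk $S^{(a)}_h$, which is negligible after dividing by $\sqrt{\log n}$, so the previous paragraph applied to each independent post-divergence walk yields the stated joint convergence to $k$ independent $\mathcal N(0,\sigma^2)$ variables.

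The main obstacle is justifying the approximation $|\xi(n+1)|=N(\log n)+\Op(\log\log n)$ rigorously. The random variables $B_h:=\mathrm{Bin}(n,M_{v_h})$ are nested ($B_{h+1}\le B_h$) and not independent, so one cannot apply a CLT directly to the process $h\mapsto B_h$. One has to show that with high probability the sign of $B_h-h$ changes exactly once, and near the deterministic critical depth $(\log n)/\mu$. This calls for Chernoff-type tail estimates on $B_h$ that are uniform over a window of size $O(\sqrt{\log n})$ around this critical level, combined with the CLT for $(S_h)$ on the same window.
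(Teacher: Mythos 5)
Your overall architecture (marginal CLT via the size-biased walk $S_h=\sum_{i\le h}\log(1/\bar Y_i)$, plus asymptotic independence because the pairwise last-common-ancestor depth is $\Op(1)$, with $\mathbb P(J_{ab}\ge h)=(\mathbb E[\sum_iY_i^2])^h<1$ under (A1-i)) parallels the paper, which however does not re-derive the marginal at all: it simply cites Devroye's depth CLT for $\xi(n)$ \cite[Th.~2]{devroye-split}, notes $\log k_n=\log n+\Op(1)$ for a uniform index $k_n$, and obtains the joint law from Lemma~\ref{lem:LCA} by conditioning on $H_n$ and the subtree sizes $S_1(n),\dots,S_k(n)$: the subtrees rooted at the height-$H_n$ ancestors are independent split trees, each $U_i(n)$ is uniform in its own subtree, and $\log S_i(n)=\log n+\Op(1)$, so the marginal CLT applies independently inside each subtree. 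Measured against that, your proposal has a genuine gap in the very place where you do new work. The combinatorial claim ``$v\in\tau_n$ if and only if at least $|v|$ of $X_1,\dots,X_n$ lie in $I_v$'' is false: only the ``if'' direction holds. For a node $v$ at height $2$ with path $\varnothing,v_1,v_2=v$, one point in $I_{v_1}\setminus I_{v_2}$ followed by a later point in $I_{v_2}$ already puts $v$ in the tree, although only one point lies in $I_v$. The correct criterion is sequential: $v\in\tau_n$ iff there are times $i_1<\dots<i_{|v|}\le n$ with $X_{i_j}\in I_{v_j}$ for each ancestor $v_j$, i.e.\ the depth is governed by a sum of geometric-type waiting times with success probabilities $M_{v_j}$, not by the single binomial $\mathrm{Bin}(n,M_{v_h})$. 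Your binomial first-passage time is therefore only a lower bound on $|\xi(n+1)|$, and the step you yourself flag as ``the main obstacle'' — that the discrepancy is $\Op(\log\log n)$, or at least $\op(\sqrt{\log n})$ — is exactly the content of Devroye's theorem and is left unproved; as it stands the marginal CLT is not established.

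A second, smaller gap is in the independence step. The heights $|U_a(n)|$ are not functionals of the driving uniforms $X_{I_a}$ and the splits alone: they also depend on the whole point cloud $(X_j)_{j\le n}$, which is shared between the $k$ vertices, so ``the walks are independent past the divergence depth'' does not by itself give asymptotic independence of the heights (past the LCA the relevant indicator variables $\{X_j\in I_{v_h^{(a)}}\}$ for different $a$ concern disjoint intervals and are negatively correlated, not independent). This can surely be repaired, e.g.\ exactly as the paper does, by conditioning on the LCA level and on the subtree sizes and using that, given these, the $k$ subtrees evolve as independent split trees of sizes $S_i(n)$ with $\log S_i(n)=\log n+\Op(1)$; but that conditioning argument (the content of Lemma~\ref{lem:LCA} and its use in the paper's proof) is missing from your sketch.
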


This lemma straightforwardly implies Proposition~\ref{prop:profile}. 
\begin{proof}[{Proof of Proposition~\ref{prop:profile}}] We use \cite[Lemma~3.1]{MM17}, which states that for a sequence of random measures $(\pi_n)_{n\geq 0}$ to converge in probability to a limiting measure $\pi_{\infty}$, it is enough to show, 
for two random variables $A_n$ and $B_n$ sampled independently according to the random measure $\pi_n$, that 
$(A_n, B_n) \to (A, B)$ in distribution, where $A$ and $B$ are $\nu$-distributed and independent. (Note that, on the left-hand side, $A_n$ and $B_n$ are independent conditionally on $\pi_n$, but not without this conditioning.)
As a direct consequence, we conclude the proof using  \cite[Lemma~3.1]{MM17} in combination with Lemma \ref{lem:height_unif} for the particular case $k=2$ in order to ensure the required convergence conditions of \cite[Lemma~3.1]{MM17} to be fulfilled.
\end{proof}

To prove Lemma~\ref{lem:height_unif}, we first prove convergence of the marginals and then derive asymptotic independence; the latter is a consequence of the following lemma:
\begin{lemma}\label{lem:LCA}
For all $h\in \mathbb N$, let us denote by $U^{\sss (h)}_1(n), \ldots, U^{\sss (h)}_k(n)$ the respective ancestors of $U_1(n), \ldots, U_k(n)$ that have height $h$ (if $h>|U_i(n)|$, we set $U^{\sss (h)}_i(n)=U_i(n)$). 
Let $H_n=\max_{1\le i < j \le k} |U_i(n)\wedge U_j(n)|$ be the height of the most recent common ancestor of $U_1(n),\dots,U_k(n)$ and $S_1(n), \ldots, S_k(n)$ be the sizes of the subtrees of $\tau_n$ rooted at $U_1^{\sss (H_n)}(n), \ldots, U_k^{\sss (H_n)}(n)$ respectively.
In distribution when $n\to+\infty$,
\[\Big(\frac{S_1(n)}n, \ldots, \frac{S_k(n)}n, H_n\Big)
\Rightarrow (\alpha_1, \ldots, \alpha_k, H),\]
where $H$ is an almost surely finite random variable, 
and $\alpha_1, \ldots, \alpha_k$ are almost surely positive random variables.
\end{lemma}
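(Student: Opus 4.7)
The approach exploits the branching structure of $\tau_n$. For each node $w \in \mathcal D_m$, write $\pi_w = \prod_{\varnothing \prec v \preccurlyeq w} Z_v$; this is the limiting proportion of $\tau_n$ that should lie in the subtree rooted at $w$. The first step is a concentration estimate: for any fixed $h \in \mathbb N$,
\begin{equation*}
\max_{|w| \le h} \Big| \tfrac{|T_n(w)|}{n} - \pi_w \Big| \to 0 \quad \text{in probability as } n \to \infty,
\end{equation*}
where $T_n(w)$ denotes the subtree of $\tau_n$ rooted at $w$. This is immediate from the Devroye--Janson construction: conditional on $\bs Y$, the number of balls $X_i$ landing under $w$ is, up to boundary corrections from $\partial\tau_n$ that are negligible at bounded depth, binomial with parameters $(n, \pi_w)$; concentration over all depth-$h$ nodes then follows by a union bound (truncating, when $m = \infty$, to children carrying all but a vanishing mass).

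Second, I introduce $k$ independent random walks $(\zeta_i^h)_{h \ge 0}$, $i = 1, \ldots, k$, on $\mathcal D_m$ driven by $\bs Y$: from any node $v$, each walk moves to child $j$ with probability $Y_j(v)$. Combining Step~1 with the conditional independence of the uniform samples $U_1(n), \ldots, U_k(n)$ given $\tau_n$, the truncated paths $(U_i^{(h)}(n))_{i \le k}$ converge jointly in distribution to $(\zeta_i^h)_{i \le k}$ for each fixed $h$; concretely, for any family $(w_i)_{i \le k}$ in $\mathcal D_m$ with $|w_i| \le h$,
\[
\mathbb P\bigl( U_i^{(h)}(n) = w_i \ \forall i \,\big|\, \bs Y \bigr) \longrightarrow \prod_{i=1}^k \pi_{w_i} \quad \text{in probability.}
\]

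Third, let $H := \max\{h \ge 0 : \exists i \neq j,\ \zeta_i^h = \zeta_j^h\}$ be the last depth at which two of the limit walks coincide. By Assumption~(A1)(i), $\nu$ charges the complement of $\{\mathbf e_1, \ldots, \mathbf e_m\}$, so conditional on any group of walks sitting at some node $v$, the probability that they do not all choose the same child of $v$ is at least $1 - \sum_j Y_j(v)^2$, which is strictly positive with uniformly positive unconditional probability. A standard geometric argument then yields $\mathbb P(H < \infty) = 1$ with $\mathbb P(H > h^\ast) \to 0$ as $h^\ast \to \infty$, providing the tightness of $H_n$ we need. Combining with Steps~1--2 gives the joint convergence
\[
\bigl( H_n,\, U_1^{(H_n)}(n), \ldots, U_k^{(H_n)}(n) \bigr) \Rightarrow \bigl( H, \zeta_1^H, \ldots, \zeta_k^H \bigr),
\]
and a final application of Step~1 at the now-tight random depth $H_n$ delivers
\[
\Bigl( \tfrac{S_1(n)}{n}, \ldots, \tfrac{S_k(n)}{n},\, H_n \Bigr) \Rightarrow \bigl( \pi_{\zeta_1^H}, \ldots, \pi_{\zeta_k^H},\, H \bigr) =: (\alpha_1, \ldots, \alpha_k, H).
\]
Each $\alpha_i$ is a product of split-vector coordinates along a path actually realised by a random walk, and is therefore strictly positive almost surely.

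The principal obstacle is applying the fixed-depth concentration of Step~1 at the random level $H_n$. The resolution is truncation at a large fixed $h^\ast$: by the geometric tail of Step~3, $\mathbb P(H_n > h^\ast)$ is small uniformly in $n$; on the complementary event, the ancestors $U_i^{(H_n)}(n)$ take finitely many values at depth $\le h^\ast$, Step~1 applies, and a standard diagonal argument (letting $h^\ast \to \infty$ after $n \to \infty$) yields the desired joint convergence.
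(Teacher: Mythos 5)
Your proposal is correct and takes essentially the same route as the paper: conditionally on $\boldsymbol Y$, subtree proportions converge to the products $\prod_{u\preccurlyeq w}Z_u$ (law of large numbers), the depth-$h$ ancestors of the uniform points are distributed according to these masses (your $\boldsymbol Y$-driven walks are exactly the limit law the paper computes), and Assumption (A1-i) gives $\beta=\mathbb E\big[1-\sum_i Y_i^2\big]>0$, so the coalescence height has the geometric tail $(1-\beta)^h$, which is the same computation the paper performs when checking that the limiting masses sum to $1$. Your truncation-at-$h^\ast$/diagonal argument and your positivity argument for the $\alpha_i$ (a finite product of a.s.\ positive coordinates along a realised path) are presentational variants of the paper's dominated-convergence step and of its direct verification that $\mathbb P(\alpha_j=0)=0$.
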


\begin{proof}
We first look at the last common ancestor of $U_1(n)$ and $U_2(n)$:
for all words $w\in\mathcal D_m$, 
we have
\[\mathbb P(U_1(n)\wedge U_2(n) = w\, |\, \tau_n)
= \sum_{1\leq i\neq j\leq m} \frac{s_{wi}(n)}n\cdot \frac{s_{wj}(n)}n,\]
where $s_v(n)$ is the size of the subtree of $\tau_n$ rooted at $v$ 
(in particular, this is equal to zero if $v\notin\tau_n$).
By the definition of the model and the strong law of large numbers we know that, conditionally on the sequence $\bs Y = (\bs Y(v))_{v\in\mathcal D_m}$, for all $v\in\mathcal D_m$, almost surely when $n\to+\infty$, 
\[
\frac{s_{v}(n)}n \to \prod_{u\preccurlyeq v} Z_u,
\]
where we recall that $Z_{w\ell} = Y_\ell(w)$, 
for all $w\in\mathcal D_m$ and $\ell\in\{1, \ldots, m\}$.
Therefore, conditionally on $\bs Y$ and almost surely when $n\to+\infty$, we have
\[\mathbb P(U_1(n)\wedge U_2(n) = w\, |\, \tau_n, \bs Y)
\to \sum_{1\leq i\neq j\leq m} \Big(\prod_{u\preccurlyeq wi}Z_u\Big) \Big(\prod_{u\preccurlyeq wj}Z_u\Big),\] 
which implies, using dominated convergence, \[\mathbb P(U_1(n)\wedge U_2(n) = w)
\to \mathbb E\bigg[\sum_{1\leq i\neq j\leq m} \Big(\prod_{u\preccurlyeq wi}Z_u\Big) \Big(\prod_{u\preccurlyeq wj}Z_u\Big)\bigg].
\]
To prove that this implies convergence in distribution of $U_1(n)\wedge U_2(n)$  to an almost surely finite random variable~$K_{1,2}\in\mathcal D_m$, we need to show that
\begin{equation}\label{eq:tightness}
\sum_{w\in\mathcal D_m}
\mathbb E\bigg[\sum_{1\leq i\neq j\leq m} \Big(\prod_{u\preccurlyeq wi}Z_u\Big) \Big(\prod_{u\preccurlyeq wj}Z_u\Big)\bigg] = 1.
\end{equation}
In order to prove \eqref{eq:tightness}, we first note that, by independence of the $Z_u$'s (except among siblings), 
for all $w\in\mathcal D_m$,
\[
\mathbb E\bigg[\sum_{1\leq i\neq j\leq m} \Big(\prod_{u\preccurlyeq wi}Z_u\Big) \Big(\prod_{u\preccurlyeq wj}Z_u\Big)\bigg]
= \mathbb E\bigg[\prod_{u\preccurlyeq w}Z^2_u\bigg]
\sum_{1\leq i\neq j\leq m} \mathbb E[Z_{wi}Z_{wj}]
= \beta\mathbb E\bigg[\prod_{u\preccurlyeq w}Z^2_u\bigg],
\]
where we have introduced the shorthand $\beta = \sum_{1\leq i\neq j\leq m} \mathbb E[Y_i Y_j]$,
with $Y$ a random vector of distribution~$\nu$
(note that, by Assumption (A1-i), $\beta\neq 0$).
This entails that
\begin{equation}\label{eq:sum_on_h}
\sum_{w\in\mathcal D_m}
\mathbb E\bigg[\sum_{1\leq i\neq j\leq m} \Big(\prod_{u\preccurlyeq wi}Z_u\Big) \Big(\prod_{u\preccurlyeq wj}Z_u\Big)\bigg] 
= \beta \sum_{w\in\mathcal D_m} \mathbb E\bigg[\prod_{u\preccurlyeq w}Z^2_u\bigg]
= \beta \sum_{h\geq 0} \sum_{|w|=h} \mathbb E\bigg[\prod_{u\preccurlyeq w}Z^2_u\bigg],
\end{equation}
where we recall that $|w|$ is the height of $w$.
For all $h\geq 0$, using again the independence of the $Z_u$'s, we infer that
\begin{equation}\label{eq:rec_h}
\sum_{|w|=h+1} \mathbb E\bigg[\prod_{u\preccurlyeq w}Z^2_u\bigg]
= \sum_{|w|=h} \sum_{i=1}^m \mathbb E\bigg[\prod_{u\preccurlyeq wi} Z^2_u\bigg]
= \sum_{|w|=h}\mathbb E\bigg[\prod_{u\preccurlyeq w}Z^2_u\bigg] \sum_{i=1}^m \mathbb E[Z^2_{wi}].
\end{equation}
Since, by definition, $\sum_{i=1}^m Z_{wi} = \sum_{i=1}^n Y_i(w) = 1$, we get that
\[\sum_{i=1}^m \mathbb E[Z^2_{wi}] 
= \Big(\sum_{i=1}^m \mathbb EZ_{wi}\Big)^2
- \sum_{1\leq i\neq j\leq m} \mathbb E[Z_{wi}Z_{wj}]
= 1-\beta,
\]
by definition of $\beta$.
Plugging the last equality into \eqref{eq:rec_h}, this amounts to
\[\sum_{|w|=h+1} \mathbb E\bigg[\prod_{u\preccurlyeq w}Z^2_u\bigg]
= (1-\beta)  \sum_{|w|=h}\mathbb E\bigg[\prod_{u\preccurlyeq w}Z^2_u\bigg]
= \dots = (1-\beta)^{h+1}
\]
by iteration, and further, using \eqref{eq:sum_on_h} and the fact that by Assumption (A1-i),
$\beta\neq 0$, we get
\[\mathbb E\bigg[\sum_{1\leq i\neq j\leq m} \Big(\prod_{u\preccurlyeq wi}Z_u\Big) \Big(\prod_{u\preccurlyeq wj}Z_u\Big)\bigg] 
 = \beta \sum_{h=0}^{\infty} (1-\beta)^h = 1.
\]
This concludes the proof of \eqref{eq:tightness}, and thus of the fact that $U_1(n)\wedge U_2(n)$ converges in distribution to an almost surely finite random variable~$K_{1,2}$,
Consequently, 
\[
H_n=\max_{1\le i< j\le k} |U_i(n)\wedge U_j(n)| \Rightarrow \max_{1\le i< j\le k} |K_{i,j}|=:H,
\]
where each of the $K_{i,j}$ (which are not independent) has the same distribution as $K_{1,2}.$ The random variable $H$ is almost surely finite since all the $K_{i,j}$'s are.

Finally, for all $n\geq 1$, $x_1, \ldots, x_k\in [0,\infty)$
and $h\in\{0, 1, \ldots\}$
we have 
\[\mathbb P\left(\frac{S_1(n)}n\geq x_1, \ldots, \frac{S_k(n)}n\geq x_k, H_n = h \,\Big|\,\tau_n, \bs Y\right)
=\sum_{w_1, \ldots, w_k\in\mathcal D_m^{\sss (h)}} 
\prod_{i=1}^k \frac{s_{w_i}(n)}n\,\bs 1\{s_{w_i}(n)\geq x_i n\},\]
where $\mathcal D_m^{\sss (h)}$ is the set of all distinct $w_1, \ldots, w_k\in\{1, \ldots, m\}^h$ such that the cardinality of the set $\{\parent w_1, \ldots, \parent w_k\}$ is at most $k-1$ (where we recall that $\parent w$ denotes the parent of a node $w$).
By the strong law of large numbers, we thus get
\[\mathbb P\left(\frac{S_1(n)}n\geq x_1, \ldots, \frac{S_k(n)}n\geq x_k, H_n = h \,\Big|\,\tau_n, \bs Y\right) \to \sum_{w_1, \ldots, w_k\in\mathcal D_m^{\sss (h)}} 
\prod_{i=1}^k \Big(\prod_{u\preccurlyeq w_i} Z_u\Big)\,\bs 1\Big\{\prod_{u\preccurlyeq w_i} Z_u\geq x_i \Big\},\]
and by dominated convergence, 
\[\mathbb P\left(\frac{S_1(n)}n\geq x_1, \ldots, \frac{S_k(n)}n\geq x_k, H_n = h\right) \to \mathbb E\bigg[\sum_{w_1, \ldots, w_k\in\mathcal D_m^{\sss (h)}} 
\prod_{i=1}^k \Big(\prod_{u\preccurlyeq w_i} Z_u\Big)\,\bs 1\Big\{\prod_{u\preccurlyeq w_i} Z_u\geq x_i \Big\}\bigg],
\]
which concludes the proof.

To see that $\alpha_j>0$ almost surely for all $j\in\{1, \ldots, k\}$, note that
\[\mathbb P(\alpha_j=0)
=\sum_{h\geq 0} \mathbb E\bigg[\sum_{w_1, \ldots, w_k\in\mathcal D_m^{\sss (h)}} 
\bs 1\Big\{\prod_{u\preccurlyeq w_j} Z_u=0\Big\}
\prod_{i=1}^k \Big(\prod_{u\preccurlyeq w_i} Z_u\Big)\bigg]
=0,\]
because, if $\prod_{u\preccurlyeq w_j} Z_u=0$ then $\prod_{i=1}^k \Big(\prod_{u\preccurlyeq w_i} Z_u\Big)=0$ and thus
\[\bs 1\Big\{\prod_{u\preccurlyeq w_j} Z_u=0\Big\}
\prod_{i=1}^k \Big(\prod_{u\preccurlyeq w_i} Z_u\Big)=0.\]
Since this is true for all $1\leq j\leq k$, we indeed have that the $\alpha_j$'s are almost surely positive.
\end{proof}

\begin{proof}[Proof of Lemma~\ref{lem:height_unif}]
We first prove the convergence of the marginals: let $k_n$ be an integer chosen uniformly at random in $\{1, \ldots, n\}$, then $\xi(k_n) = U_1(n)$ in distribution; recall that by definition, for all $n\geq 1$, $\xi(n)$ is the unique node that belongs to $\tau_n$ but not to $\tau_{n-1}$.
By~\cite[Th.\ 2]{devroye-split}, 
we know that, in distribution when $n\to+\infty$,
\[\frac{|\xi(n)|-(\log n)/\mu}{\sqrt{(\log n)/\mu^3}}\Rightarrow\mathcal N(0,\sigma^2).\]
Therefore, since $\log k_n = \log n + \mathcal \Op(1)$ when $n\to+\infty$, we get
\begin{equation}\label{eq:marginal}
\frac{|\xi(k_n)|-(\log n)/\mu}{\sqrt{(\log n)/\mu^3}}\Rightarrow\mathcal N(0,\sigma^2),
\end{equation}
which immediately entails the convergence of the marginals to the desired limit. 
To show that the limits are independent, we use Lemma~\ref{lem:LCA}, and the fact that, by definition of the model, given $H_n$, $S_1(n), \ldots, S_k(n)$, the trees rooted at $U_1^{\sss (H_n)}(n), \ldots, U_k^{\sss (H_n)}(n)$ are independent split trees of split distribution $\nu$ and of respective sizes $S_1(n), \ldots, S_k(n)$. Moreover, for all $1\leq i\leq k$, the node $U_i(n)$ is distributed uniformly at random among the nodes of the split tree rooted at $U_i^{\sss (H_n)}(n)$.
Therefore, given $H_n$, $S_1(n), \ldots, S_k(n)$, we have, in distribution and jointly for all $1\leq i\leq k$, 
\[|U_i(n)| = H_n + |\widehat U^{\sss (i)}(S_i(n))|,\]
where the $\widehat U^{\sss (i)}$'s are independent, and for all $i$, $\widehat U^{\sss (i)}(S_i(n))$ is a node taken uniformly at random in a split tree of size $S_i(n)$.
As a consequence, applying \eqref{eq:marginal} to each of the~$k$ independent split trees, we get that, in distribution and jointly for all $1\leq i\leq k$,
\begin{align*}
\frac{|U_i(n)|-(\log n)/\mu}{\sqrt{(\log n)/\mu^3}}
&= \frac{H_n +  |\widehat U^{\sss (i)}(S_i(n))|-(\log n)/\mu}{\sqrt{(\log n)/\mu^3}}\\
&= \frac{|\widehat U^{\sss (i)}(S_i(n))|-(\log S_i(n))/\mu}{\sqrt{(\log S_i(n))/\mu^3}}
\cdot \sqrt{\frac{\log S_i(n)}{\log n}} + \frac{\log S_i(n)-\log n}{\sqrt{\mu\log n}}
+\op(1)\\
&\Rightarrow \Lambda_i,
\end{align*}
where $(\Lambda_1,\ldots, \Lambda_k)$ are $k$ independent centred Gaussians of variance $\sigma^2$;
we have used the fact that by Lemma~\ref{lem:LCA}, 
$\log S_i(n) = \log n + \Op(1)$ when $n\to+\infty$.
\end{proof}

Applying the law of large numbers to the i.i.d.\ edge lengths, and using the fact that, by Lemma~\ref{lem:LCA}, the height $H_n$ of the last common ancestor of $U_1(n),\ldots, U_k(n)$ converges in distribution to an almost surely finite random variable, Lemma~\ref{lem:height_unif} entails a similar result for the distances of $U_1(n),\ldots, U_k(n)$ to the root (for the distance $d_{\bs L}$).
In the following, $L$ denotes a random variable distributed according to $\varpi.$
\begin{lemma}[CLT for distances to the root]\label{lem:real_heights_finite_var}
Under the assumptions of Theorem~\ref{th:voronoi},
if $\mathrm{Var}(L)<+\infty$, then, in distribution when $n\to+\infty$, we have
\[\left(\frac{|U_1(n)|_{\bs L} - (\log n)\mathbb EL/\mu}{\sqrt{(\log n)/\mu^3}},\ldots,
\frac{|U_k(n)|_{\bs L} - (\log n)\mathbb EL/\mu}{\sqrt{(\log n)/\mu^3}}
\right)\Rightarrow (\Xi_1, \ldots, \Xi_k),
\]
where the $\Xi_1, \ldots, \Xi_k$ 
are independent centred Gaussian random variables
of variance $\mathrm{Var}(L)+\sigma^2(\mathbb EL)^2$.
\end{lemma}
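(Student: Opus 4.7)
The plan is to decompose
\[|U_i(n)|_{\bs L} \;=\; |U_i(n)|\,\mathbb EL \;+\; \sum_{\varnothing\prec w\preccurlyeq U_i(n)} (L_w-\mathbb EL),\]
and to show that, after subtracting $(\log n)\mathbb EL/\mu$ and dividing by $\sqrt{(\log n)/\mu^3}$, the two summands produce jointly independent Gaussian limits. The first summand is handled directly by Lemma~\ref{lem:height_unif}: multiplying the joint height convergence by the constant $\mathbb EL$ yields convergence of $(|U_i(n)|\mathbb EL-(\log n)\mathbb EL/\mu)/\sqrt{(\log n)/\mu^3}$ to $\mathbb EL\cdot(\Lambda_1,\ldots,\Lambda_k)$, which gives $k$ independent centred Gaussians coming from the tree structure alone. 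The non-trivial part is the joint treatment of the centred edge-length sums, since the paths from the root to the $U_i(n)$ can share some edges.

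To control the overlap, I would use Lemma~\ref{lem:LCA}: setting $H_n:=\max_{i<j}|U_i(n)\wedge U_j(n)|$ and letting $V_i$ be the ancestor of $U_i(n)$ at height $\min(H_n,|U_i(n)|)$, I split
\[\sum_{\varnothing\prec w\preccurlyeq U_i(n)}(L_w-\mathbb EL)\;=\;\sum_{\varnothing\prec w\preccurlyeq V_i}(L_w-\mathbb EL)\;+\;\sum_{V_i\prec w\preccurlyeq U_i(n)}(L_w-\mathbb EL).\]
The first \lq\lq shared\rq\rq\ sum contains at most $H_n=\Op(1)$ terms with finite variance, hence is itself $\Op(1)$ and negligible after dividing by $\sqrt{\log n}$. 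Crucially, by the definition of $H_n$, the edge sets appearing in the \lq\lq suffix\rq\rq\ sums for different indices $i$ are pairwise disjoint: either two $V_i$'s coincide but the paths to the corresponding $U_i$'s branch immediately at that common node (because their last common ancestor then has height exactly $H_n$), or the $V_i$'s differ and lie in disjoint subtrees at height $H_n$.

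Conditioning on $\tau_n$ and $(U_1(n),\ldots,U_k(n))$, the $k$ suffix sums thus become conditionally independent sums of centred i.i.d.\ copies of $L-\mathbb EL$, with respective numbers of summands $N_i:=|U_i(n)|-H_n$ satisfying $N_i\to+\infty$ and $N_i/((\log n)/\mu)\to 1$ in probability (by Lemmas~\ref{lem:height_unif} and \ref{lem:LCA}). An application of Anscombe's theorem gives, jointly in $i$ and conditionally on the tree, independent $\mathcal N(0,\mathrm{Var}(L))$ limits for each suffix sum rescaled by $\sqrt{N_i}$, and dominated convergence on the (bounded) conditional characteristic functions transfers this to unconditional convergence. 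Since the height contribution is $\tau_n$-measurable while $\bs L$ is independent of $\tau_n$, the height and suffix limits combine into jointly independent Gaussians, yielding the statement of the lemma. The main obstacle is this last step: careful bookkeeping is needed to pass from the conditional CLT with random index to the unconditional joint limit, and to verify that the prefix contributions and the mismatch between the rescalings $\sqrt{N_i}$ and $\sqrt{(\log n)/\mu^3}$ are absorbed correctly by Slutsky's lemma.
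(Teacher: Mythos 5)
Your proposal is correct and takes essentially the same route as the paper's proof: isolate the shared initial segments of the root-to-$U_i(n)$ paths (negligible after dividing by $\sqrt{\log n}$ since $H_n$ is tight by Lemma~\ref{lem:LCA}), exploit independence of the remaining per-index edge-length sums, apply a CLT with random index jointly in $i$, and combine with Lemma~\ref{lem:height_unif} via Slutsky, using that $\bs L$ is independent of $(\tau_n)_{n\geq 1}$. The only cosmetic difference is in how the shared prefix is neutralized: the paper replaces the prefix edge lengths by i.i.d.\ copies so that each $A_i(n)$ becomes a full-length sum from an independent sequence (and proves the random-index CLT by conditioning on the index), whereas you centre the edge lengths, discard the $\Op(1)$ prefix, and use the pairwise disjointness of the suffix paths directly, invoking Anscombe's theorem.
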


\begin{proof}
In this proof, we set $U_i[H_n] = U_i^{\sss (H_n)}(n)$, i.e.\ the ancestor of $U_i(n)$ at height $H_n$,
where $H_n$ is defined in Lemma~\ref{lem:LCA}.
For all $1\leq i\leq k$, we have
\begin{equation}\label{eq:Ai}
|U_i(n)|_{\bs L} = \sum_{\varnothing\neq u\preccurlyeq U_i(n)} L_u
= \sum_{\varnothing \neq u \preccurlyeq U_i[H_n]} \big(L_u - L_u^{\sss (i)}\big)
+ A_i(n),
\end{equation}
where the $\bs L^{\sss (i)} = (L_u^{\sss (i)})_{u\in\mathcal D_m}$ are i.i.d.\ copies of $\bs L$, and where 
\[A_i(n):=\sum_{\varnothing\neq u\preccurlyeq U_i[H_n]} L^{\sss (i)}_u
+\sum_{U_i[H_n]\prec u \preccurlyeq U_i(n)} L_u.\]
Since, by Lemma~\ref{lem:LCA}, $H_n$ converges in distribution to an almost surely finite random variable $H$, we have
\begin{equation}\label{eq:LLN_change_dist}
\frac1{\sqrt{\log n}}
\sum_{\varnothing \neq u \preccurlyeq U_i[H_n]}\big(L_u-L_u^{\sss (i)}\big) 
\to 0,
\end{equation}
in distribution when $n\to+\infty$.
Note that, given $H_n$, the random variables $A_1(n), \ldots, A_k(n)$ are independent, 
because the $L_u$'s are independent, 
and the sums in $A_1(n), \ldots, A_k(n)$ that involve the sequence $\bs L$ 
(as opposed to its i.i.d.\ copies $\bs L^{\sss (1)}, \ldots, \bs L^{\sss (k)}$) 
range over distinct nodes~$u$. Therefore, in distribution, we have, jointly for all $1\leq i\leq k$,
\[A_i(n)= \sum_{\varnothing\neq u\preccurlyeq U_i(n)} L^{\sss (i)}_u
= \sum_{j=1}^{|U_i(n)|} \widetilde L^{\sss (i)}_j,\]
where $\bs L^{\sss (i)} = (\widetilde L_j^{\sss (i)})_{j\geq 1}$ is a sequence of i.i.d.\ copies of the $L^{\sss (i)}_u$'s, and the $k$ sequences $(\bs L^{\sss (i)})_{1\leq i\leq k}$ are independent of each other.
By the central limit theorem, we have, jointly for all $1\leq i\leq k$,
\[\frac{\sum_{j=1}^m \widetilde L^{\sss (i)}_j - m\mathbb EL}{\sqrt m} \Rightarrow  \Theta_i,\]
in distribution when $n\to+\infty$, 
where $\Theta_1, \ldots, \Theta_k$ are independent standard Gaussians.
Since $|U_i(n)|\to+\infty$ in probability when $n\to+\infty$, and since $(|U_i(n)|)_{1\leq i\leq k}$  is independent from $(L_j^{\sss (i)} : j\geq 1)_{1\leq i\leq k}$, this implies that, jointly for all $1\leq i\leq k$,
\begin{equation}\label{eq:CLT_A_i}
\frac{A_i(n)-|U_i(n)|\mathbb EL}{\sqrt{\mathrm{Var}(L)|U_i(n)|}} 
\Rightarrow \Theta_i.
\end{equation}
Indeed, for all $u_1, \ldots, u_k\in\mathbb R$, and $\varepsilon>0$,
there exists $m_0>0$ such that, for all $m_1, \ldots, m_k\geq m_0$,
\[\bigg|\mathbb P\left(\frac{\sum_{j=1}^{m_i} \widetilde L^{\sss (i)}_j - m_i\mathbb EL}{\sqrt m_i}\leq u_i, \forall 1\leq i\leq k\right) - \mathbb P(\Theta_i\leq u_i, \forall 1\leq i\leq k)\bigg|\leq \nicefrac\varepsilon2.\]
Because $U_i(n)\to+\infty$ in probability, there exists $n_0$ such that, for all $n\geq n_0$,
$\mathbb P(\inf_{1\leq i\leq k} |U_i(n)|\geq m_0) \geq 1-\nicefrac\varepsilon2$.
Therefore, for all $u_1, \ldots, u_k\in\mathbb R$, $\varepsilon>0$, and $n\geq n_0$,
\begin{align*}
&\bigg|\mathbb P\left(\frac{A_i(n) - |U_i(n)|\mathbb EL}{\sqrt |U_i(n)|}\leq u_i, \forall 1\leq i\leq k\right) - \mathbb P(\Theta_i\leq u_i, \forall 1\leq i\leq k)\bigg|\\
&\quad\leq \bigg|\mathbb P\left(\frac{A_i(n) - |U_i(n)|\mathbb EL}{\sqrt |U_i(n)|}\leq u_i, \forall 1\leq i\leq k \text{ and }\inf_{1\leq i\leq k}|U_i(n)|\geq m_0\right) - \mathbb P(\Theta_i\leq u_i, \forall 1\leq i\leq k)\bigg| + \frac\varepsilon2\\
&\quad \leq \sum_{\ell = m_0}^\infty \bigg|\mathbb P\left(\frac{A_i(n) - \ell\mathbb EL}{\sqrt \ell}\leq u_i, \forall 1\leq i\leq k\right) - \mathbb P(\Theta_i\leq u_i, \forall 1\leq i\leq k)\bigg| \mathbb P\Big(\inf_{1\leq i\leq k}|U_i(n)|=\ell\Big) +  \frac\varepsilon2\leq \varepsilon,
\end{align*}
where, in the second inequality, we have conditioned on the different possible values of~$\inf_{1\leq i\leq k}|U_i(n)|$, and used the triangular inequality.
This concludes the proof of~\eqref{eq:CLT_A_i}.

We thus get that, jointly for all $1\leq i\leq k$,
\begin{align*}
\frac{|U_i(n)|_{\bs L}-(\log n)\mathbb EL/\mu}{\sqrt{(\log n)/\mu^3}}
&= \frac{|U_i(n)|_{\bs L}-|U_i(n)|\mathbb EL}{\sqrt{\mathrm{Var}(L)|U_i(n)|}}
\cdot \sqrt{\frac{\mathrm{Var}(L)|U_i(n)|}{(\log n)/\mu^3}}
+ \frac{|U_i(n)|\mathbb EL-(\log n)\mathbb EL/\mu}{\sqrt{(\log n)/\mu^3}}\\
&\Rightarrow \Theta_i\sqrt{\mathrm{Var}(L)}+\Lambda_i\mathbb EL,
\end{align*}
where we have used Lemma~\ref{lem:height_unif} 
(where $\Lambda_1, \ldots, \Lambda_k$ are defined).
Note that, by definition, $\Theta_i$ is independent from $\Lambda_i$ for all $1\leq i\leq k$; 
indeed, $\Theta_i$ is $\bs L^{\sss (i)}$-measurable, 
while $\Lambda_i$ is $(\tau_n)_{n\geq 1}$-measurable.
Therefore, $\Xi_i:=\Theta_i\sqrt{\mathrm{Var}(L)}+\Lambda_i\mathbb EL$ is a centred Gaussian of variance $\mathrm{Var}(L) + \sigma^2\mathbb E(L)^2$, and $\Xi_1, \ldots, \Xi_k$ are independent, as claimed.
\end{proof}

We now look at the respective version of Lemma \ref{lem:real_heights_finite_var} when the edge lengths have heavier tails:
\begin{lemma}\label{lem:real_height_heavy_tails}
Under the assumptions of Theorem~\ref{th:voronoi},
if there exists $\alpha\in (1,2)$ and a function $\ell\colon[0,+\infty)\to[0,+\infty)$ slowly varying at infinity such that $\mathbb P(L\geq x)=x^{-\alpha}\ell(x)$ for all $x\geq 0$, then, 
in distribution when $n\to+\infty$,
\[\left(\frac{|U_1(n)|_{\bs L} - (\log n)\mathbb EL/\mu}{((\log n)/\mu)^{\nicefrac1\alpha}},\ldots,
\frac{|U_k(n)|_{\bs L} - (\log n)\mathbb EL/\mu}{((\log n)/\mu)^{\nicefrac1\alpha}}
\right)\Rightarrow (\Upsilon_1(\alpha), \ldots, \Upsilon_k(\alpha)),
\]
where the $\Upsilon_1(\alpha), \ldots, \Upsilon_k(\alpha)$ are i.i.d.\ copies of a centred $\alpha$-stable random variable.
\end{lemma}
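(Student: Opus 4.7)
The plan is to mirror the proof of Lemma~\ref{lem:real_heights_finite_var}, replacing the classical central limit theorem for the i.i.d.\ edge-lengths by the stable limit theorem. Recall the decomposition
\[|U_i(n)|_{\bs L} = \sum_{\varnothing \neq u \preccurlyeq U_i[H_n]} \bigl(L_u - L_u^{\sss (i)}\bigr) + A_i(n),\qquad
A_i(n) = \sum_{\varnothing\neq u\preccurlyeq U_i[H_n]} L^{\sss (i)}_u + \sum_{U_i[H_n]\prec u \preccurlyeq U_i(n)} L_u,\]
where $\bs L^{\sss (1)},\dots,\bs L^{\sss (k)}$ are independent copies of $\bs L$. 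By Lemma~\ref{lem:LCA}, $H_n$ is tight, so the first sum only involves $\Op(1)$ terms and is thus $\op\bigl(((\log n)/\mu)^{1/\alpha}\bigr)$. Conditionally on $H_n$, the variables $A_1(n),\dots,A_k(n)$ are independent (same argument as in the previous proof), and jointly distributed as $\sum_{j=1}^{|U_i(n)|}\widetilde L_j^{\sss (i)}$ for i.i.d.\ copies $(\widetilde L_j^{\sss (i)})_{j\geq 1}$ of $L$, with the $k$ sequences being independent of each other and of $(|U_i(n)|)_{1\leq i\leq k}$.

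Next, since $\mathbb P(L\geq x)=x^{-\alpha}\ell(x)$ with $\alpha\in(1,2)$ and $\mathbb E L<\infty$, the classical stable limit theorem gives, jointly for $1\leq i\leq k$,
\[\frac{\sum_{j=1}^{m}\widetilde L_j^{\sss (i)} - m\mathbb E L}{m^{\nicefrac1\alpha}} \Rightarrow \Upsilon_i(\alpha),\]
where the $\Upsilon_i(\alpha)$ are i.i.d.\ centred $\alpha$-stable random variables (any residual slowly varying factor is absorbed into $\Upsilon_i(\alpha)$, as in the statement). Since $|U_i(n)|\to+\infty$ in probability and $(|U_i(n)|)_{1\leq i\leq k}$ is independent of the sequences $(\widetilde L_j^{\sss (i)})_{j\geq 1}$, a conditioning argument identical to the one used in the proof of Lemma~\ref{lem:real_heights_finite_var} (splitting on the event $\{\inf_i |U_i(n)|\geq m_0\}$ and using the triangle inequality) yields, jointly for $1\leq i\leq k$,
\[\frac{A_i(n) - |U_i(n)|\,\mathbb E L}{|U_i(n)|^{\nicefrac1\alpha}} \Rightarrow \Upsilon_i(\alpha).\]

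Finally, I combine this with Lemma~\ref{lem:height_unif}. Writing
\[\frac{|U_i(n)|_{\bs L} - (\log n)\mathbb E L/\mu}{((\log n)/\mu)^{\nicefrac1\alpha}}
= \frac{A_i(n)-|U_i(n)|\mathbb EL}{|U_i(n)|^{\nicefrac1\alpha}}\cdot\left(\frac{|U_i(n)|}{(\log n)/\mu}\right)^{\!\nicefrac1\alpha}
+\frac{(|U_i(n)|-(\log n)/\mu)\,\mathbb EL}{((\log n)/\mu)^{\nicefrac1\alpha}}+\op(1),\]
the ratio $|U_i(n)|/((\log n)/\mu)\to 1$ in probability by Lemma~\ref{lem:height_unif}, so the first term converges jointly to $\Upsilon_i(\alpha)$; and since Lemma~\ref{lem:height_unif} also gives $|U_i(n)|-(\log n)/\mu = \Op(\sqrt{\log n})$, the second term is $\op(1)$ because $\alpha<2$ implies $\sqrt{\log n}=o((\log n)^{\nicefrac1\alpha})$. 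Independence of the limits is inherited from the independence of $\bs L^{\sss (1)},\dots,\bs L^{\sss (k)}$.

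The main obstacle is the Anscombe-type step, i.e.\ justifying the stable limit theorem when the number of summands is the random quantity $|U_i(n)|$, jointly in $i$. Unlike the finite-variance case, there is no second moment to invoke uniform integrability, so I would argue by an explicit conditioning on the joint law of $(|U_i(n)|)_{1\leq i\leq k}$ and exploit tightness of $|U_i(n)|/\log n$ from Lemma~\ref{lem:height_unif} to reduce to large deterministic indices, as in the proof of Lemma~\ref{lem:real_heights_finite_var}.
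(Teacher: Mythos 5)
Your proposal is correct and follows essentially the same route as the paper: the same decomposition via $A_i(n)$ with the $H_n$-part negligible, the stable limit theorem applied to $\sum_{j=1}^{|U_i(n)|}\widetilde L_j^{\sss(i)}$ with the random index handled exactly as in the finite-variance case, and the same final rewriting using $|U_i(n)|/((\log n)/\mu)\to1$ together with $\sqrt{\log n}=o((\log n)^{\nicefrac1\alpha})$ since $\alpha<2$. The Anscombe-type step you flag is dealt with in the paper simply by invoking the functional limit theorem for heavy-tailed sums (Gnedenko--Kolmogorov) combined with the same conditioning/independence argument used for Lemma~\ref{lem:real_heights_finite_var}, so no new idea is needed there.
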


\begin{proof}
We proceed as in the proof of Lemma~\ref{lem:real_heights_finite_var} and employ the same notation:
in particular, using that our assumptions on $L$ entail its expectation being finite, \eqref{eq:Ai} and~\eqref{eq:LLN_change_dist} give that, in distribution when $n\to+\infty$, 
\begin{equation}\label{eq:UtoA}
|U_i(n)|_{\bs L} =  A_i(n)+o_{\mathbb P}(\sqrt{\log n}) = A_i(n)+\op((\log n)^{\nicefrac1\alpha}).
\end{equation} 
Using the functional limit theorem for sums of i.i.d.\ heavy-tailed random variables 
(see, e.g., Theorem 2 in \cite[\S\ 35]{GK}), we have that
\[\frac{A_i(n)-|U_i(n)|\mathbb EL}{|U_i(n)|^{\nicefrac1\alpha}}\Rightarrow \Upsilon_i(\alpha),\]
where $\Upsilon_1(\alpha), \ldots, \Upsilon_k(\alpha)$ are i.i.d.\ copies of an $\alpha$-stable random variable.
We thus get that
\[\frac{|U_i(n)|_{\bs L} - |U_i(n)|\mathbb EL}{|U_i(n)|^{\nicefrac1\alpha}}
= \frac{A_i(n)-|U_i(n)|\mathbb EL}{|U_i(n)|^{\nicefrac1\alpha}} 
+ \frac{|U_i(n)|_{\bs L}-A_i(n)}{((\log n)/\mu)^{\nicefrac1\alpha}}
\cdot\frac{((\log n)/\mu)^{\nicefrac1\alpha}}{|U_i(n)|^{\nicefrac1\alpha}}
\Rightarrow \Upsilon_i(\alpha),
\]
in distribution when $n\to+\infty$, where we have used \eqref{eq:UtoA} and the fact that, by Lemma~\ref{lem:height_unif}, $\frac{|U_i(n)|}{(\log n)/\mu} \to 1$ in probability when $n\to+\infty$.
We thus get that, jointly for all $1\leq i\leq k$,
\begin{align}
\frac{|U_i(n)|_{\bs L}-(\log n)\mathbb EL/\mu}{((\log n)/\mu)^{\nicefrac1\alpha}}
&= \frac{|U_i(n)|_{\bs L}-|U_i(n)|\mathbb EL}{|U_i(n)|^{\nicefrac1\alpha}}
\cdot \bigg(\frac{|U_i(n)|}{(\log n)/\mu}\bigg)^{\!\!\nicefrac1\alpha}
+ \frac{|U_i(n)|\mathbb EL-(\log n)\mathbb EL/\mu}{((\log n)/\mu)^{\nicefrac1\alpha}}\label{eq:remark}\\
&\Rightarrow\Upsilon_i(\alpha),\notag
\end{align}
in distribution when $n\to+\infty$; here, we have used Lemma~\ref{lem:height_unif} and the fact that $\alpha<2$, which implies $\nicefrac1\alpha>\nicefrac12$ (and $\alpha > 1$ again to get the finiteness of $\mathbb E L$).
\end{proof}

\begin{remark}
Note that in the $\alpha$-stable case, the second summand on the right-hand side of~\eqref{eq:remark} is negligible compared to the first summand, i.e.\ the fluctuations coming from the height of $U_i(n)$ are asymptotically negligible in front of the fluctuations coming from the edge-lengths.
\end{remark}

Next we control the sizes of subtrees rooted at certain nodes within the tree. For this purpose, imagine that $U_1(n)$ is the closest to the root (in graph distance) among $U_1(n), \ldots, U_k(n)$. 
Then the Voronoi cell of $U_2(n)$ is the subtree rooted at the ancestor of $U_2(n)$ 
that has height 
\[|U_1(n)\wedge U_2(n)|_{\bs L} + \big||U_1(n)|-|U_2(n)|\big|_{\bs L}/2.\] 
From Lemma~\ref{lem:LCA}, we already know that $|U_1(n)\wedge U_2(n)|$ converges in distribution to an almost surely finite random variable.
The following lemma gives a limiting result for the size of the subtree rooted at an ancestor 
of $U_2(n)$ at height $h(n)$ for some function $h(n) = o(\log n)$ when $n\to+\infty$.


For $f\colon \mathbb N\to \mathbb R$, $x\in[0, \infty)$, $n\geq 1$, and $1\leq i\leq k$, 
we write $D^{\sss (i)}_n(xf)$ (or $D^{\sss (i)}_n(xf(n))$) for the size of the subtree of $\tau_n$ rooted at the ancestor of $U_i(n)$ closest to the root whose $\bs L$-distance to the root is at least
\begin{equation} \label{eq:Fdef}
xF_i(n):=\min(xf(n), |U_i(n)|_{\bs L}).
\end{equation}
By Lemmas~\ref{lem:real_heights_finite_var} and~\ref{lem:real_height_heavy_tails}, 
$|U_i(n)|_{\bs L}$ grows logarithmically in $n$;
therefore, if $f(n)=o(\log n)$, then for large~$n$ we typically have $F_i(n)=f(n)$.

\begin{proposition}[Convergence of (extended) fringe trees]\label{prop:fringe_trees}
Let $f\colon \mathbb N\to \mathbb N$ be a function such that $\lim_{n\to+\infty}f(n)=+\infty$, and $x> 0$. 
We assume that either $f(n) = o(\log n)$ 
when $n\to+\infty$, or $f(n) = \log n$ for all $n\geq 1$ and set \[C(f)=\begin{cases}+\infty &\text{if }f(n)=o(\log n),\\\mathbb EL/\mu&\text{if }f(n)=\log n.\end{cases}\]
Then, under the assumptions of Theorem~\ref{th:voronoi}, 
for all $1\leq i\leq k$,
for all $0<a\leq b<C(f)$,
\begin{equation} \label{eq:fringe_trees}
\sup_{x\in[a,b]} \Big|\frac{\log (D_n^{\sss (i)}(xf)/n)}{xf(n)}- \frac{\mathbb E[\log \bar Y]}{\mathbb EL}\Big|\to 0 
\qquad\text{in probability when $n\to+\infty$.}
\end{equation}
\end{proposition}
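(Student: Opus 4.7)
The plan is to reduce the statement to a law-of-large-numbers estimate of the product of splits along the path from the root to $U_i(n)$, combined with an LLN translating $\bs L$-distance into graph distance. Denote by $A_j^{(i)}$ the ancestor of $U_i(n)$ at graph height $j$, and let $H_x^{(i)}(n)$ be the smallest $h$ with $\sum_{j=1}^h L_{A_j^{(i)}} \geq xf(n)$, so that by construction $D_n^{(i)}(xf) = s_{A_{H_x^{(i)}(n)}^{(i)}}(n)$, where $s_w(n)$ denotes the size of the subtree of $\tau_n$ rooted at $w$.

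First I would use that $(L_{A_j^{(i)}})_{j\ge 1}$ is an i.i.d. sequence with finite mean (Assumption (A2)), independent of the tree structure (the $L$'s are attached to nodes independently). The LLN (or its heavy-tailed analogue) yields $H_x^{(i)}(n) = xf(n)/\mathbb{E} L \cdot (1+\op)$, provided $xf(n)$ stays below $|U_i(n)|_{\bs L}$. By Lemmas~\ref{lem:real_heights_finite_var}--\ref{lem:real_height_heavy_tails}, $|U_i(n)|_{\bs L}$ grows like $(\log n)\,\mathbb{E} L/\mu$; hence for $x\in[a,b]$ with $b<C(f)$ we remain strictly inside the path to $U_i(n)$ asymptotically, and $H_x^{(i)}(n)\to\infty$ in probability.

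Next I would invoke the structural fact underlying Devroye's CLT (cited in Lemma~\ref{lem:height_unif}): along the path from the root to a uniformly chosen vertex of a split tree, the splits $Z_{A_j^{(i)}}$ are distributed as i.i.d. copies of the size-biased $\bar Y$. Applying the LLN to the first $H_x^{(i)}(n)$ terms gives
\[
\sum_{j=1}^{H_x^{(i)}(n)} \log Z_{A_j^{(i)}} = H_x^{(i)}(n)\cdot \mathbb{E}[\log \bar Y]\cdot(1+\op).
\]
It remains to relate $\log(D_n^{(i)}(xf)/n)$ to this sum. For each fixed $w$, the SLLN for the insertion process gives $s_w(n)/n \to \prod_{\varnothing\neq u\preccurlyeq w} Z_u$; here $w=A_{H_x^{(i)}(n)}^{(i)}$ is random and its height diverges with $n$, so a quantitative version is needed. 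I would obtain it by writing $s_w(n)/n$ (conditionally on $\bs Y$ and the path to $w$) as a martingale in the insertion time, whose deviations from the deterministic limit $\prod Z_u$ are small enough (polynomial in $n$) to be negligible on the logarithmic scale $f(n)$ once divided by $xf(n)$. Assembling these three estimates yields pointwise in $x$
\[
\frac{\log(D_n^{(i)}(xf)/n)}{xf(n)} \to \frac{\mathbb{E}[\log \bar Y]}{\mathbb{E} L}.
\]
Uniformity over $x\in[a,b]$ then follows because $x\mapsto D_n^{(i)}(xf)$ is non-increasing and the limit is linear (hence continuous) in $x$, via a Dini-type argument adapted to convergence in probability (verify pointwise convergence on a finite grid in $[a,b]$ and sandwich between adjacent grid points using monotonicity).

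The main obstacle lies in the third step: because $\prod_{u\preccurlyeq w} Z_u$ can be as small as $e^{-\mu b f(n)/\mathbb{E} L}$, and $f(n)$ may be much smaller than $\log n$, the standard pointwise LLN for $s_w(n)/n$ is not delicate enough. One must control the relative fluctuations of $s_w(n)$ around its mean uniformly along the random path to $U_i(n)$, with a quantitative error that is $\op(f(n))$ on the logarithmic scale; this quantitative estimate of subtree sizes at deep random nodes is what makes the proposition genuinely more informative than Devroye's theorem applied at a single fixed depth.
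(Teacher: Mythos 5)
Your outline reproduces the architecture of the paper's own proof: (i) an LLN for the i.i.d.\ edge lengths converting the $\bs L$-threshold $xf(n)$ into a graph height $\approx xf(n)/\mathbb EL$, (ii) an LLN for the size-biased splits along the path to $U_i(n)$ giving $\log\prod_{u\preccurlyeq w}Z_u \approx -\mu\, xf(n)/\mathbb EL$, and (iii) a comparison of $\log(D_n^{\sss(i)}(xf)/n)$ with $\log\prod_{u\preccurlyeq w}Z_u$. The genuine gap is in (iii), which you yourself flag as the main obstacle: the fix you sketch (a martingale in insertion time with deviations ``polynomial in $n$'') is not the right estimate. What is needed is \emph{relative} concentration of the subtree size around its conditional mean $(n-k_n(x))Q_n(x)$, where $Q_n(x)=\prod_{u\preccurlyeq w}Z_u$ can be as small as $n^{-\mu b/\mathbb EL+o(1)}$ when $f(n)=\log n$; an additive fluctuation that is merely ``polynomial in $n$'' could be of the same order as, or larger than, this mean, and the argument would collapse. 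The paper resolves this by observing that, conditionally on $k_n(x)$ and the splits, $D_n^{\sss(i)}(xf)$ is distributed as $\sum_{\ell>k_n(x)}\bs 1\{X_\ell<Q_n(x)\}$, a sum of independent Bernoulli$(Q_n(x))$ indicators, so an exponential Chebychev (Chernoff) bound gives multiplicative concentration with failure probability of order $\exp(-c\,Q_n(b)(n-k_n))$. This is exactly where the hypothesis $b<C(f)$ is used a second time, beyond guaranteeing $xf(n)\le|U_i(n)|_{\bs L}$: for $f(n)=\log n$ it forces $Q_n(b)(n-k_n)\ge n^{1-\mu b/\mathbb EL+o(1)}\to+\infty$, so the failure probability stays negligible even after a union bound over the $O(f(n))$ values of $x$ at which $D_n^{\sss(i)}(xf)$ can jump. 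Your proposal neither produces a bound of this multiplicative type nor identifies where $b<C(f)$ enters the concentration, so the pointwise convergence you need for $f(n)=\log n$ and $x$ near $\mathbb EL/\mu$ is not actually established. (A smaller point: the ancestor $w$ is determined by $U_i(n)$, i.e.\ by one of the $X_\ell$'s, so when conditioning on the path one must note that only $\Op(\log n)$ of the indicators are affected, which is negligible against $(n-k_n(x))Q_n(x)$ but should be said.)

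On the positive side, your uniformity step is sound and arguably cleaner than the paper's: since $x\mapsto\log(D_n^{\sss(i)}(xf)/n)$ is monotone and the limit $x\mapsto x\,\mathbb E[\log\bar Y]/\mathbb EL$ is continuous, a P\'olya/Dini grid argument upgrades pointwise convergence in probability to uniform convergence on $[a,b]$, whereas the paper combines a law-of-the-iterated-logarithm bound for the splits term with a union bound over the jump points of $x\mapsto D_n^{\sss(i)}(xf)/((n-k_n(x))Q_n(x))$. But this simplification only pays off once the pointwise statement is proved, and that is precisely where the missing concentration estimate sits.
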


This lemma is at the heart of the proof of Theorem~\ref{th:voronoi}: it establishes a law of large numbers for the logarithm of the size of fringe trees. 
A fringe tree, as defined in~\cite{JansonHolm} (see also the references therein for a literature review on the subject), is the subtree rooted at a node taken uniformly at random among the $n$-nodes of a random tree (in our case the $n$-node split tree of split distribution $\nu$). 
An extended fringe tree (still following~\cite{JansonHolm}) 
is the subtree rooted at one of the ancestors of this randomly chosen node, under the assumption that this ancestor is at a \emph{fixed} graph distance of the randomly chosen node.
In Proposition~\ref{prop:fringe_trees}, however, we also consider subtrees rooted 
at an ancestor of a node $U(n) = U_i(n)$ chosen uniformly at random in our tree, 
but this ancestor can be at a distance 
that grows with $n$. 
Therefore, we get results that are weaker than the result stated in~\cite{JansonHolm} in the case of the binary search tree and the random recursive tree (which, we recall, are both split trees).
 
\begin{proof}[Proof of Proposition \ref{prop:fringe_trees}]
We defined the split tree $(\tau_n)_{n\geq 1}$ as a sequence of random trees, with $\xi(n+1)$ denoting the unique node in $\tau_{n+1}$ but not in $\tau_n$. 
Now let $\widetilde U$ be a uniform random variable on $[0,1]$, 
set $k_n = \lceil \widetilde Un\rceil\in \{1, \ldots, n\}$ for all $n\geq 1,$ and note that 
\begin{equation} \label{eq:n-kInfty}
n-k_n\to+\infty \qquad \text{almost surely as } n\to+\infty.
\end{equation}
{We fix $i\in\{1,\dots,k\}$ throughout the proof.} Letting $U_i(n)$ be the node of index $k_n$, we observe that $U_i(n)$ is indeed uniformly distributed in $\tau_n$, as required.

We fix $0<a< b<C(f)$. For $x\in[a,b]$, we define $h=h(n)$ so that $xh(n)$ 
is the height of the ancestor of $U_i(n)$ closest to the root whose $\bs L$-distance 
to the root is at least $xF_i(n)=\min(xf(n), |U_i(n)|_{\bs L})$ (recall \eqref{eq:Fdef}). 
As a consequence of Lemmas \ref{lem:real_height_heavy_tails} and \ref{lem:real_heights_finite_var}, respectively, $F_i(n)\to+\infty$ in probability when $n\to+\infty$, therefore
\begin{equation}\label{eq:h_vs_f}
\frac{F_i(n)}{h(n)} \to \mathbb EL\quad\text{ in probability when }n\to+\infty
\end{equation}
by the law of large numbers. 

Recall that 
$U_i^{\sss (xh(n))}(n)$ denotes the ancestor of $U_i(n)$ closest to the root whose height is at least $xh(n)$; 
we denote by $k_n(x)$ the integer such that $U_i^{\sss (xh(n))}(n) = \xi(k_n(x))$. 
By definition, we have $k_n(x)\leq k_n$. 
We next derive a law of large numbers for the width of the split interval associated to the node of index $k_n(x)$. Recall that, by definition of $(\tau_n)_{n\geq 1}$, to each node $w \in \tau_n$ (among which $\xi(k_n)$) is associated a sub-interval of $[0,1]$, whose length is given by $\prod_{\varnothing\neq u\preccurlyeq w} Z_u.$ We let 
\begin{equation} \label{eq:QnDef}
Q_n(x) := \prod_{\varnothing\neq u\preccurlyeq \xi(k_n(x))} Z_u
\end{equation} 
be the length of the interval associated to node $\xi(k_n(x))$.
We claim the following law of large numbers for $Q_n(x)$:
\begin{equation}\label{eq:LLN2}
\frac{\log Q_n(x)}{xh(n)}
\to \mathbb E[\log \bar Y] = -\mu \qquad \text{ in probability when $n\to+\infty$.}
\end{equation}
Indeed, first note that the random variables $Z_u$ 
are sized-biased since we condition on the event  $u\preccurlyeq \xi(k_n)$; 
more precisely, we condition the intervals associated to the nodes $u$ occurring in the product of \eqref{eq:QnDef} to contain $X_{k_n}$. In other words, conditionally on $u\preccurlyeq \xi(k_n)$, we have $Z_u = \bar Y$ in distribution, where  $\bar Y$ as in \eqref{eq:barY} and $\bs Y = (Y_1, \ldots, Y_m)\sim \nu$. 
Therefore, by the law of large numbers, since $h(n)\to+\infty$ in probability 
(cf.\ \eqref{eq:h_vs_f}), we get
\[\frac{\log Q_n(x)}{xh(n)}
= \frac1{xh(n)}\log \bigg(\prod_{\varnothing\neq u\preccurlyeq \xi(k_n(x))} Z_u\bigg)
= \frac1{xh(n)}\sum_{\varnothing\neq u\preccurlyeq \xi(k_n(x))} \log Z_u
\to \mathbb E[\log \bar Y],\]
in probability when $n\to+\infty$, which concludes the proof of~\eqref{eq:LLN2}.

\medskip
The main step now is to show 
\begin{equation}\label{eq:h_instead_f}
\sup_{x\in[a,b]} \Big|\frac{\log (D_n^{\sss (i)}(xf)/n)}{xh(n)}- \mathbb E[\log \bar Y]\Big|\to 0 
\qquad\text{in probability when $n\to+\infty$,}
\end{equation}
At the end of the proof, we show that this implies~\eqref{eq:fringe_trees}.

To prove~\eqref{eq:h_instead_f}, we rewrite its left-hand side as a sum of several terms to which we will apply various concentration inequalities. First note that, for all $x\in [a,b]$, for all $n\geq 1$,
\begin{align}
&\frac1{xh(n)}\Big|\log \Big(\frac{D_n^{\sss (i)}(xf)}n\Big)-xh(n)\mathbb E[\log \bar Y]\Big|\notag\\
&\qquad\leq \frac1{ah(n)}\Big|\log \frac{D_n^{\sss (i)}(xf)}{(n-k_n(x))Q_n(x)}\Big|
+\frac1{ah(n)}\Big|\log\Big(1-\frac{k_n(x)}n\Big)\Big|
+\Big|\frac{\log Q_n(x)}{xh(n)}-\mathbb E[\log \bar Y]\Big|.\label{eq:three_terms}
\end{align}
We show that the right hand side is $\op(1)$ as $n\to+\infty$ \emph{uniformly for $x\in[a,b]$} by treating each of the three summands separately. 

We start with the second term, which is the easiest. Recall that $k_n(x)\leq k_n$ and that $k_n = \lceil \widetilde Un\rceil$, so
\begin{equation}\label{eq:1st}
\sup_{x\in[a,b]}\Big|\log\Big(1-\frac{k_n(x)}n\Big)\Big| \leq \Big|\log\Big(1-\frac{k_n}n\Big)\Big|
\to -\log (1-\widetilde U),
\end{equation}
almost surely as $n\to+\infty$.

For the third term on the right-hand side of~\eqref{eq:three_terms}, we proceed as follows. 
In distribution, $(Q_n(x))_{x\in[a,b]} = (\sum_{\ell\leq xh(n)}\log \widetilde Z_\ell)_{x\in[a,b]}$, 
where $(\widetilde Z_\ell)_{\ell\geq 1}$ is a sequence of i.i.d.\ copies of $\bar Y$.
We apply the law of the iterated logarithm to the sequence 
of i.i.d.\ random variables $(\log \widetilde Z_\ell)_{\ell\geq 1}$, 
this gives
\[\limsup_{m\to+\infty}  \frac{|\sum_{\ell=1}^m \log \widetilde Z_\ell - m\mathbb E[\log\bar Y]|}{\sqrt{m\log\log m}} = \mathrm{Var}(\log\bar Y)^{\nicefrac12}= \sigma\quad\text{ almost surely}.\]
This implies in particular that there exists an almost surely finite random number $m_0$ such that almost surely,
\[\sup_{m\geq m_0}  \frac{|\sum_{\ell=1}^m \log \widetilde Z_\ell - m\mathbb E[\log\bar Y]|}
{\sqrt{m\log\log m}}
\leq 2\sigma.\]
We fix $\varepsilon>0$ and choose $m_1\geq m_0$ such that 
$2\sigma\sqrt{\log\log(m_1)/m_1}\leq \varepsilon$.
We have, almost surely
\begin{align*}
\sup_{m\geq m_1} \Big|\frac{\sum_{\ell=1}^m \log \widetilde Z_\ell}m -\mathbb E[\log\bar Y]\Big|
&\leq \sqrt{\frac{\log\log m_1}{m_1}}
\sup_{m\geq m_1} \frac{|\sum_{\ell=1}^m \log \widetilde Z_\ell - m\mathbb E[\log\bar Y]|}{\sqrt{m\log\log m}}\\
&\leq\sqrt{\frac{\log\log m_1}{m_1}} \sup_{m\geq m_0}  \frac{|\sum_{\ell=1}^m \log \widetilde Z_\ell - m\mathbb E[\log\bar Y]|}
{\sqrt{m\log\log m}}\leq \varepsilon.
\end{align*}
Consequently, we have for all $n\geq 1$ that 
\begin{align*}
&\mathbb P\left(\sup_{x\in[a,b]}  \Big|\frac{\log Q_n(x)}{xh(n)} - \mathbb E[\log\bar Y]\Big|\geq \varepsilon\right)
= \mathbb P\left(\sup_{x\in[a,b]}  \Big|\frac{\sum_{\ell=1}^{xh(n)} \log \widetilde Z_\ell}{xh(n)}- \mathbb E[\log\bar Y]\Big|\geq\varepsilon\right)\\
&\qquad\leq \mathbb P\left(\sup_{x\in[a,b]}  \Big|\frac{\sum_{\ell=1}^{xh(n)} \log \widetilde Z_\ell}{xh(n)}- \mathbb E[\log\bar Y]\Big|\geq\varepsilon\text{ and }ah(n)\geq m_1\right) + \mathbb P(ah(n)<m_1)\\
&\qquad\leq \mathbb P\left(\sup_{m\geq m_1}  \Big|\frac{\sum_{\ell=1}^m \log \widetilde Z_\ell}{m}- \mathbb E[\log\bar Y]\Big|\geq\varepsilon\right) + \mathbb P(ah(n)<m_1)
= \mathbb P(ah(n)<m_1)\to 0,
\end{align*}
as $n\to+\infty$, because $h(n)\to+\infty$ in probability with~$n$.
In other words,
\begin{equation}\label{eq:2nd}
\sup_{x\in [a,b]} \Big|\frac{\log Q_n(x)}{xh(n)}-\mathbb E[\log \bar Y]\Big|
=\op(1)\qquad\text{ as }n\to+\infty.
\end{equation}

Finally, we deal with the first term in the right-hand side of~\eqref{eq:three_terms} and aim to prove 
\begin{equation}\label{eq:3rd}
	\sup_{x\in[a,b]}\Big|\log \frac{D_n^{\sss (i)}(xf)}{(n-k_n(x))Q_n(x)}\Big| = \op(1)\quad\text{ as }n\to+\infty,
\end{equation}
Mind that inserting \eqref{eq:1st}, \eqref{eq:2nd} and \eqref{eq:3rd} into \eqref{eq:three_terms} implies \eqref{eq:h_instead_f}.
In order to prove \eqref{eq:3rd}, we first recall that, for all $x\in[a,b]$ conditionally on $k_n(x)$ and $\bs Z = (Z_u)_{u\in\mathcal D_m}$, we have, in distribution, 
\[D_n^{\sss (i)}(xf) = \sum_{\ell=k_n(x)+1}^n \bs1_{X_\ell < Q_n(x)}.\]
Thus, for all $x\in [a,b]$, $\varepsilon\in(0,1)$, $\lambda\geq 0$, the exponential Chebychev inequality yields
\begin{align*}
&\mathbb P\bigg(\sum_{\ell=k_n(x)}^n \bs1_{X_\ell < Q_n(x)}\geq (1+\varepsilon)Q_n(x)\big(n-k_n(x)\big)\,\Big|\, k_n(x), \bs Z\bigg)\\
&\leq \mathrm e^{-\lambda(1+\varepsilon)Q_n(x)(n-k_n(x))}
\prod_{\ell=k_n(x)}^n\big(1+Q_n(x)(\mathrm e^\lambda-1)\big)
= \exp\big(-Q_n(x)(n-k_n(x))((1+\varepsilon) \lambda -\mathrm e^{\lambda}+1)\big).
\end{align*}
Taking $\lambda = \log(1+\varepsilon)$, this yields the upper bound
\[\mathbb P\bigg(\sum_{\ell=k_n(x)}^n \bs1_{X_\ell < Q_n(x)}\geq (1+\varepsilon)Q_n(x)\big(n-k_n(x)\big)\,\Big|\, k_n(x), \bs Z\bigg)\leq\exp\big(-Q_n(x)(n-k_n(x))\tilde\varepsilon'\big),
\]
where we have set $\varepsilon':=(1+\varepsilon)\log(1+\varepsilon) - \varepsilon>0$. Using the fact that, for all $x\in[a,b]$, $Q_n(x)\geq Q_n(b)$, and $k_n(x)\leq k_n$, taking expectations on both sides, and then a supremum over $x\in[a,b]$, we infer that
\[\sup_{x\in[a,b]} 
\mathbb P\bigg(\sum_{\ell=k_n(x)}^n \bs1_{X_\ell < Q_n(x)}\geq (1+\varepsilon)Q_n(x)\big(n-k_n(x)\big)\bigg)\leq\mathbb E\big[\exp\big(-Q_n(b)(n-k_n)\tilde\varepsilon'\big)\big]
\]
In a similar vein, one deduces
\[\sup_{x\in[a,b]}\mathbb P\bigg(\sum_{\ell=k_n(x)}^n \bs1_{X_\ell < Q_n(x)}\leq (1-\varepsilon)Q_n(x)\big(n-k_n(x)\big)\bigg)\leq\mathbb E\big[\exp\big(-Q_n(b)(n-k_n)\varepsilon''\big)\big],\]
where $\varepsilon''=(1-\varepsilon)\log(1-\varepsilon)+\varepsilon.$
In total, we thus get that, for all $\varepsilon>0$,
\begin{equation}\label{eq:LDP}
\sup_{x\in[a,b]}\mathbb P\bigg(\big|D_n^{\sss (i)}(xf)-(n-k_n(x))Q_n(x)\big|>\varepsilon Q_n(x)\big(n-k_n(x)\big)\bigg)
\leq 2\mathbb E\big[\exp\big(-c\,Q_n(b)(n-k_n)\big)\big],
\end{equation}
where $c:=\min(\varepsilon', \varepsilon'')>0$.

We now recall that (see~\eqref{eq:h_vs_f}) $h(n) \sim F_i(n)/\mathbb EL$ in probability as $n\to+\infty$.
In the case when $f(n) = o(\log n)$, since $|U_i(n)|_{\bs L}/\log n$ converges to $\mu/\mathbb EL$ in probability when $n\to+\infty$, and 
since $xF_i(n) = \min(xf(n), |U_i(n)|_{\bs L})$, we get 
\begin{equation}\label{eq:cvprob_Fi/f}
\frac{F_i(n)}{f(n)}\to 1 \quad \text{ in probability as }n\to+\infty.
\end{equation}
In the case when $f(n) = \log n$, since $|U_i(n)|_{\bs L}/\log n$ converges to $\mu/\mathbb EL$ in probability when $n\to+\infty$, and since $x\leq b<\mu/\mathbb EL$, we get $F_i(n)\sim \log n$ in probability when $n\to+\infty$. 
In both cases, 
\begin{equation}\label{eq:cvprob_h/f}
\frac{h(n)}{f(n)}\to \frak c := \frac1{\mathbb EL}\quad \text{ in probability as }n\to+\infty.
\end{equation}
For all $\delta>0$ and for all $\eta>0$ small enough such that $\sup_{x\in [1-\eta, 1+\eta]}|\log x|\leq \delta$,
we have 
\begin{align}
&\mathbb P\bigg(\sup_{x\in[a,b]}\Big|\log \frac{D_n^{\sss (i)}(xf)}{(n-k_n(x))Q_n(x)}\Big|\geq\delta
\bigg)\notag\\
&\qquad\leq
\mathbb P\bigg(\sup_{x\in[a,b]}\Big|\frac{D_n^{\sss (i)}(xf)}{(n-k_n(x))Q_n(x)}-1\Big|\geq\eta
\text{ and } \frak c-\eta \leq \frac{h(n)}{f(n)}\leq \frak c+\eta\bigg)
+ \mathbb P\Big(\Big|\frak c-\frac{h(n)}{f(n)}\Big|>\eta\Big),\label{eq:prob}
\end{align}
where we have set $\frak c = 1/\mathbb EL$.
Because of~\eqref{eq:cvprob_h/f}, the second summand converges to~0 as $n\to+\infty$.
Note that, as $x$ increases between $a$ and $b$, ${D_n^{\sss (i)}(xf)}/{(n-k_n(x))Q_n(x)}$ only changes value when $xh(n)$ crosses an integer value.
Thus, the event inside the first probability on the right-hand side implies that there exists $\ell\in\mathbb N\cap[a(\frak c-\eta)f(n), b(\frak c+\eta)f(n)]$ such that $|{D_n^{\sss (i)}(xf)}/{(n-k_n(x))Q_n(x)}-1|\geq \eta$ for $x = \ell/h(n)$.
We thus get via a union bound 
\begin{align}
&\mathbb P\bigg(\sup_{x\in[a,b]}\Big|\frac{D_n^{\sss (i)}(xf)}{(n-k_n(x))Q_n(x)}-1\Big|\geq\eta
\text{ and } \frak c-\eta \leq \frac{h(n)}{f(n)}\leq \frak c+\eta\bigg)\notag\\
&\qquad\leq \sum_{\ell=\lceil a(1-\eta)f(n)\rceil}^{\lfloor b(1+\eta)f(n)\rfloor}
\mathbb P\bigg(\Big|\frac{D_n^{\sss (i)}(xf)}{(n-k_n(x))Q_n(x)}-1\Big|\geq\eta
\text{ for } x=\ell/h(n)\text{ and } \frak c-\eta \leq \frac{h(n)}{f(n)}\leq \frak c+\eta\bigg)\notag\\
&\qquad\leq ([b(\frak c+\eta)-a(\frak c-\eta)]f(n)+1)
\sup_{x\in [\hat a, \hat b]} \mathbb P\bigg(\Big|\frac{D_n^{\sss (i)}(xf)}{(n-k_n(x))Q_n(x)}-1\Big|\geq \eta\bigg)\notag\\
&\qquad\leq Kf(n)\mathbb E\big[\exp\big(-c\,Q_n(\hat b)(n-k_n)\big)\big],\label{eq:K}
\end{align}
where $\hat a = a(\frak c-\eta)/(\frak c-\eta)$ and $\hat b = b(\frak c+\eta)/(\frak c-\eta)$, and we used~\eqref{eq:LDP} in the last inequality.
We have also set $K$ large enough so that, for all $n\geq 1$, $([b(\frak c+\eta)-a(\frak c-\eta)]f(n)+1)\leq Kf(n)$, which is possible since $f(n)\to+\infty$ with~$n$.

%
Using the law of large numbers in~\eqref{eq:LLN2}, we get that, in probability as $n\to\infty$, 
\[-cQ_n(b)(n-k_n)+\log f(n)
= -c \mathrm e^{(-\mu \hat b/\mathbb EL+o(1))f(n)}(n-k_n)+\log f(n)
\to -\infty,\]
because either $f(n)=o(\log n)$, or $f(n) = \log n$ and $\eta>0$ can be chosen small enough so that $\hat b<\mathbb EL/\mu$ (because, by assumption, $b<\mathbb EL/\mu$, and $\hat b = b(\frak c+\eta)/(\frak c-\eta)$).
This implies that the right-hand side in~\eqref{eq:K} converges to zero as $n\to+\infty$.
Using again that $h(n)\sim f(n)$ in probability when $n\to+\infty$, 
we get that the right-hand side of~\eqref{eq:prob} also tends to zero with~$n$,
and thus \eqref{eq:3rd} is true, which concludes the proof of~\eqref{eq:h_instead_f}.

It only remains to show that~\eqref{eq:h_instead_f} implies~\eqref{eq:fringe_trees};
intuitively, this is true because of~\eqref{eq:cvprob_h/f}. Indeed, we have
\begin{align*}
&\sup_{x\in[a,b]} \Big|\frac{\log (D_n^{\sss (i)}(xf)/n)}{xf(n)}- \frac{\mathbb E[\log \bar Y]}{\mathbb EL}\Big|\\
&\leq \sup_{x\in[a,b]} \Big|\frac{\log (D_n^{\sss (i)}(xf)/n)}{xh(n)\mathbb EL}- \frac{\mathbb E[\log \bar Y]}{\mathbb EL}\Big|
+  \sup_{x\in[a,b]} \Big|\frac{\log (D_n^{\sss (i)}(xf)/n)}{xh(n)\mathbb EL}\Big(1-\frac{h(n)\mathbb EL}{f(n)}\Big)\Big|\\
 &= \frac{\op(1)}{\mathbb EL} + \Big|1-\frac{h(n)\mathbb EL}{f(n)}\Big|  \sup_{x\in[a,b]} \Big|\frac{\log (D_n^{\sss (i)}(xf)/n)}{xh(n)\mathbb EL}\Big|
 = \op\bigg(1+\sup_{x\in[a,b]} \Big|\frac{\log (D_n^{\sss (i)}(xf)/n)}{xh(n)}\Big|\bigg),
\end{align*}
where we have used~\eqref{eq:h_instead_f}, in the first equality, 
and~\eqref{eq:cvprob_h/f} in the second one.
By~\eqref{eq:h_instead_f} and the triangular inequality, 
this last supremum goes to zero in probability when $n\to+\infty$, which concludes the proof.
\end{proof}

We are now ready to prove Theorem~\ref{th:voronoi}.
\begin{proof}[Proof of Theorem~\ref{th:voronoi}]
We let $U_{\sss (1)}(n), \ldots, U_{\sss (k)}(n)$ be the nodes $U_1(n), \ldots, U_k(n)$ ordered in increasing $\bs L$-distance to the root, that is, $|U_{\sss (1)}(n)|_{\bs L}\leq \cdots\leq |U_{\sss (k)}(n)|_{\bs L}$, and let $V_1(n), \ldots, V_k(n)$ be the sizes of their respective Voronoi cells (in that order, i.e.\ the Voronoi cell of $U_{\sss (i)}$ is $V_i(n)$).
We set $\frak m = \mathbb E[\log \bar Y]/\mathbb EL$ and start by showing that, in distribution when $n\to+\infty$,
\begin{equation}\label{eq:cv_other_order}
\Big(\frac{\log (V_2(n)/n)}{v_n}, \ldots, \frac{\log (V_k(n)/n)}{v_n}\Big)
\Rightarrow \frac{\frak m}2 (\Psi_{(2)}-\Psi_{(1)}, \ldots, \Psi_{(k)}-\Psi_{(1)}),
\end{equation}
where $\Psi_{(1)}\leq \cdots\leq \Psi_{(k)}$ is the increasing order statistics of $\Psi_1, \ldots, \Psi_k,$ and with 
\begin{equation}\label{eq:def_vn}
v_n = \begin{cases}
\sqrt{(\log n)/\mu^3} &\text{ if }\mathrm{Var}(L)<+\infty,\\
((\log n)/\mu)^{\nicefrac1\alpha}&\text{ otherwise}.
\end{cases}
\end{equation}
Note that, because $\frak m = -\mathbb E[\log(\nicefrac1{\mathbb E\bar Y})]/\mathbb EL = -\mu/\mathbb EL$, this is equivalent to 
\begin{equation}\label{eq:cv_other_order2}
\Big(\frac{\log (V_2(n)/n)}{(\log n)^{\nicefrac1\alpha}}, \ldots, \frac{\log (V_k(n)/n)}{(\log n)^{\nicefrac1\alpha}}\Big)
\Rightarrow \frac{\frak v}{2\mathbb EL} (\Psi_{(1)}-\Psi_{(2)}, \ldots, \Psi_{(1)}-\Psi_{(k)}),
\end{equation}
where we recall that $\alpha :=2$ when $\mathrm{Var}(L)<+\infty$, and where we have set
\[\frak v = \begin{cases}
\mu^{-\nicefrac12} & \text{ if }\mathrm{Var}(L)<+\infty,\\
\mu^{1-\nicefrac1\alpha} &\text{ otherwise.}
\end{cases}\]
We now show that~\eqref{eq:cv_other_order2} implies~\eqref{eq:VorConvDist}: the only difference between the two is that the entries in the left-hand side of~\eqref{eq:cv_other_order2} are ordered in increasing distance of the respective $U_i(n)$'s to the root, while those in the left-hand side of~\eqref{eq:VorConvDist} are ordered in decreasing sizes of the Voronoi cells. 
However, the convergence in \eqref{eq:cv_other_order2} implies in particular that
\[\mathbb P\big(V_1(n)\geq V_2(n)\geq\cdots\geq V_k(n)\big)
= \mathbb P \big(V_i(n) = V_{\sss (i)}(n), \,\forall i \in \{1, \ldots, k\} \big)
\to 1,
\]
where we recall that, by definition, $V_{\sss (i)}(n)$ is the $i$-th largest of the $k$ Voronoi cells. 
We now let $\mathcal C_n$ denote the event that $V_1(n)\geq V_2(n)\geq\cdots\geq V_k(n)$: we have, for all $x_2, \ldots, x_k<0$,
\[
\mathbb P\bigg(\forall i \in \{2, \ldots,  k\} \,  \colon\,  \frac{\log (V_{\sss (i)}(n)/n)}{(\log n)^{\nicefrac1\alpha}}\geq x_i\bigg)
=\mathbb P\bigg(\forall i \in \{2, \ldots,  k\} \,  \colon\,  \frac{\log (V_i(n)/n)}{(\log n)^{\nicefrac1\alpha}}\geq x_i \text{ and }\mathcal C_n\bigg)+o(1)\]
when $n\to+\infty$, because $\mathbb P(\mathcal C_n)\to 1$. Thus,
\eqref{eq:cv_other_order} entails \eqref{eq:VorConvDist}, and due to the above it is sufficient to establish~\eqref{eq:cv_other_order2}.


For this purpose, for each $n\geq 1$ set 
\begin{equation} \label{eq:Kdef}
K_n := \max_{1\leq i<j\leq k}\{|U_i(n)\wedge U_j(n)|_{\bs L}\}.
\end{equation}
On the event 
\begin{equation}\label{eq:def_En}
\mathcal E_n = \bigcap_{1\leq i<j\leq k}\Big\{\big|U_i(n)\wedge U_j(n)\big|_{\bs L}+\big\lceil\big||U_i(n)|_{\bs L} - |U_j(n)|_{\bs L}\big|/2\big\rceil\geq K_n\Big\},
\end{equation}
for all $1\leq i\neq j\leq k$, 
the $\bs L$-distance to the root of the point where the Voronoi cells of $U_i(n)$ and $U_j(n)$ would meet if we ignored all other $k-2$ points would be at least $K_n$. 
Thus, for all $i\neq \ell:=\mathrm{argmin}_{1\leq j\leq k} |U_j(n)|_{\bs L}$, 
the Voronoi cell of $U_i(n)$ meets the Voronoi cell of $U_\ell(n)$ 
at $\bs L$-distance to the root exceeding~$K_n$, 
implying that the Voronoi cell of $U_i(n)$ for all $i\neq \ell$ is the subtree rooted at the ancestor of $U_i(n)$ closest to the root among all ancestors of $U_i(n)$ whose $\bs L$-distance to the root exceeds
\begin{equation} \label{eq:constSpeed}
\big|U_i(n)\wedge U_\ell(n)\big|_{\bs L}+\frac{\big||U_i(n)|_{\bs L} - |U_\ell(n)|_{\bs L}\big|}2.
\end{equation} 
By Lemmas~\ref{lem:LCA},~\ref{lem:real_heights_finite_var} 
and~\ref{lem:real_height_heavy_tails}, we have
$\mathbb P(\mathcal E_n) \to 1$; 
thus, in the rest of the proof, we work on the event $\mathcal E_n$.

For all $1\leq i\neq j\leq k$, we set 
\[X^{\sss (i,j)}_n = \frac{\big|U_i(n)\wedge U_j(n)\big|_{\bs L}+\big||U_i(n)|_{\bs L} - |U_j(n)|_{\bs L}\big|/2}{v_n},\]
where $v_n$ is as in~\eqref{eq:def_vn}.
Note that, by symmetry, the $X_n^{\sss (i,j)}$ all have the same distribution.
Moreover, by Lemmas~\ref{lem:LCA}, ~\ref{lem:real_heights_finite_var} and~\ref{lem:real_height_heavy_tails} (see also for notation),
in distribution when $n\to+\infty$, jointly for all $1\leq i\neq j\leq k$, we have 
\begin{equation}\label{eq:cvX}
X_n^{\sss (i,j)} \Rightarrow |\Psi_i-\Psi_j|/2,
\end{equation}
where we have set, for all $1\leq i\leq k$,
\begin{equation}\label{eq:def_Psi}
\Psi_i := 
\begin{cases}
\Xi_i& \text{ if }\mathrm{Var}(L)<+\infty,\\
\Upsilon_i(\alpha)&\text{ otherwise}.
\end{cases}
\end{equation}
For all $a, b\in (0,\infty)$, we define the event
\[
\mathcal B_n(a,b) := \big\{\forall 1\leq i<j\leq k\colon X_n^{\sss (i,j)}\in [a,b]\big\}.
\]
Note that, due to~\eqref{eq:cvX}, 
\begin{equation} \label{eq:PBn}
\lim_n \mathbb P(\mathcal B_n(a,b)^c) \to 0 \quad \text{ as }a\to0 \text{ and }b\to+\infty. 
\end{equation}
To prove \eqref{eq:cv_other_order}, we start by setting, 
for any permutation $\sigma\in\frak S_k$,
\[\mathcal A_n(\sigma) = \{|U_{\sigma_1}(n)|_{\bs L}\leq |U_{\sigma_2}(n)|_{\bs L}\leq \cdots\leq |U_{\sigma_k}(n)|_{\bs L}\}.\]
For all $\varepsilon>0$ and $\sigma\in\frak S_k$, we have, using the notation introduced before as well as in Proposition~\ref{prop:fringe_trees}, 
\begin{align} \label{eq:distrEst}
\begin{split}
&\mathbb P\big(\exists i \in \{2, \ldots,  k\} \,  \colon\,  
\log (V_i(n)/n)\geq (\frak m+\varepsilon) X_n^{\sss (\sigma_1,\sigma_i)} v_n 
\text{ and }\mathcal B_n(a,b) \cap \mathcal A_n(\sigma)\big)\\
&
=\mathbb P\bigg(\exists i \in \{2, \ldots,  k\} \,  \colon\, 
\log \Big(\frac{D_n^{\sss (\sigma_i)}\big({X^{\sss (\sigma_1, \sigma_i)}_n} v_n \big)}n\Big)
\geq  (\frak m+\varepsilon) X_n^{\sss (\sigma_1,\sigma_i)}v_n
\text{ and }\mathcal B_n(a,b)\cap \mathcal A_n(\sigma)\bigg)\\
&
\leq \sum_{i=2}^k\mathbb P\bigg(\sup_{x\in [a,b]} \frac{\log (D_n^{\sss (i)}(xv_n)/n)}{xv_n}-\frak m\geq \varepsilon\bigg)
\leq (k-1)\mathbb P\bigg(\sup_{x\in [a,b]} \frac{\log (D_n^{\sss (i)}(xv_n)/n)}{xv_n}-\frak m\geq \varepsilon\bigg)\to 0,
\end{split}
\end{align}
when $n\to+\infty$ by Proposition~\ref{prop:fringe_trees}.
Since $\lim_n \mathbb P(\mathcal B_n(a,b)^c) \to 0$ when $a\to0$ and $b\to+\infty$ due to \eqref{eq:PBn}, we get 
\[\mathbb P\bigg(\exists i \in \{2, \ldots,  k\} \,  \colon\,   \frac{\log (V_i(n)/n)}{X_n^{\sss (\sigma_1,\sigma_i)}v_n}\geq \frak m+\varepsilon \text{ and } \mathcal A_n(\sigma)\bigg)\to 0,
\]
when $n\to+\infty$. 
Therefore, for all $\varepsilon\in(0, -\frak m)$, 
for all $x_i<0$, we have, asymptotically when $n\to+\infty$ that
\begin{align*}
&\mathbb P\bigg(\forall i \in \{2, \ldots,  k\} \,  \colon\,  \frac{\log (V_i(n)/n)}{v_n}\geq x_i\bigg)\\
&\hspace{1cm}=(1+o(1))\sum_{\sigma\in\frak S_k} 
\mathbb P\bigg(\forall i \in \{2, \ldots,  k\} \,  \colon\, 
\frac{\log (V_i(n)/n)}{v_n}\geq x_i 
\text{ and }
\frac{\log (V_i(n)/n)}{X_n^{\sss (\sigma_1,\sigma_i)}v_n}\leq \frak m+\varepsilon
\text{ and }\mathcal A_n(\sigma)\bigg)\\
&\hspace{1cm}
\leq (1+o(1))\sum_{\sigma\in\frak S_k} 
\mathbb P\bigg(\forall i \in \{2, \ldots,  k\} \,  \colon\, 
x_i\leq  X_n^{\sss (\sigma_1,\sigma_i)}(\frak m+\varepsilon)
\text{ and }\mathcal A_n(\sigma)\bigg)\\
&\hspace{1cm}= (1+o(1))\sum_{\sigma\in\frak S_k} 
\mathbb P\bigg(\forall i \in \{2, \ldots,  k\} \,  \colon\, 
X_n^{\sss (\sigma_1,\sigma_i)}\leq \frac{x_i}{\frak m+\varepsilon}
\text{ and }\mathcal A_n(\sigma)\bigg),
\end{align*}
where we have used that $\frak m+\varepsilon<0$.
Since this is true for all $\varepsilon\in(0,-\frak m)$, 
we get
\[\mathbb P\bigg(\forall i \in \{2, \ldots,  k\} \,  \colon\,   \frac{\log (V_i(n)/n)}{v_n}\geq x_i\bigg)
\leq (1+o(1))\sum_{\sigma\in\frak S_k} \mathbb P\bigg(\forall i \in \{2, \ldots,  k\} \,  \colon\,  X_n^{\sss (\sigma_1,\sigma_i)}\leq \frac{x_i}{\frak m}
\text{ and }\mathcal A_n(\sigma)\bigg).
\]
By definition of $\mathcal A_n(\sigma)$ and due to \eqref{eq:cvX}, we thus get
\begin{align*}
&\mathbb P\bigg(\forall i \in \{2, \ldots,  k\} \,  \colon\,  \frac{\log (V_i(n)/n)}{v_n}\geq x_i\bigg)\\
&\leq (1+o(1))\sum_{\sigma\in\frak S_k} 
\mathbb P\bigg(\forall i \in \{2, \ldots,  k\} \,  \colon\, 
|\Psi_{\sigma_1}-\Psi_{\sigma_i}|\leq \frac{2x_i}{\frak m}
\text{ and }\Psi_{\sigma_1}\leq \cdots \leq \Psi_{\sigma_k}\bigg)\\
&= (1+o(1))\mathbb P\bigg(\forall i \in \{2, \ldots,  k\} \,  \colon\, 
\Psi_{(i)}-\Psi_{(1)}\leq \frac{2x_i}{\frak m}\bigg),
\end{align*}
which concludes the proof of \eqref{eq:cv_other_order} and thus of \eqref{eq:cv_other_order2}. \end{proof}

\section{Proof of Theorem \ref{th:territories}}\label{sec:territories}
Before proving Theorem \ref{th:territories}, we prove a central limit theorem that extends the weak law of large numbers proved in Proposition~\ref{prop:fringe_trees}:
\begin{lemma}\label{lem:CLT_random_index}
Using the same notation as in Proposition~\ref{prop:fringe_trees},
assume that $(x_n)_{n\geq 1}$ is a sequence of positive random variables converging  in probability as $n\to+\infty$ to a positive constant $x<C(f)$. 
Then, under the assumptions of Theorem~\ref{th:territories} the following hold true.
\begin{itemize}
\item[{\rm\bf (i)}] If $\mathrm{Var}(L)<+\infty$, then, in distribution when $n\to+\infty$,
\[\frac{\log (D_n^{\sss (i)}(x_n f)/n)-xf(n)\mathbb E[\log \bar Y]/\mathbb EL}{\sqrt{xf(n)}}\Rightarrow \mathcal N\bigg(0, \frac{\mathrm{Var}(\log \bar Y)}{\mathbb EL}+\frac{\mathbb E[\log \bar Y]^2\mathrm{Var}(L)}{(\mathbb EL)^2}\bigg).\]

\item[{\rm\bf (ii)}] If $\mathbb P(L\geq x)= \ell(x)x^{-\alpha}$ for $\alpha\in(1,2)$ and $\ell$ slowly varying at infinity, then, in distribution when $n\to+\infty$,
\[\frac{\log (D_n^{\sss (i)}(x_n f)/n)-xf(n)\mathbb E[\log \bar Y]/\mathbb EL}{(xf(n))^{\nicefrac1\alpha}}
\Rightarrow \frac{\mathbb E[\log(\nicefrac1{\bar Y})]}{(\mathbb EL)^{1+\nicefrac1\alpha}}\,\Upsilon(\alpha),\]
where $\Upsilon(\alpha)$ is an $\alpha$-stable distribution.
\end{itemize}
\end{lemma}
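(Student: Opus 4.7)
The starting point is the decomposition already used in the proof of Proposition~\ref{prop:fringe_trees}:
\[
\log\frac{D_n^{\sss (i)}(x_n f)}{n} \;=\; \log Q_n(x_n) \;+\; \log\Big(1-\frac{k_n(x_n)}{n}\Big) \;+\; \log\frac{D_n^{\sss (i)}(x_n f)}{(n-k_n(x_n))Q_n(x_n)}.
\]
The second summand converges a.s.\ to $-\log(1-\widetilde U)$ and is thus $\Op(1)$, while the third tends to $0$ in probability by the same exponential Chebyshev argument as in Proposition~\ref{prop:fringe_trees}, now needed only at the single point $x=x_n$, so no uniformity is required. Both contributions are therefore negligible after dividing by $\sqrt{xf(n)}$ or $(xf(n))^{\nicefrac1\alpha}$, and the proof reduces to establishing the corresponding CLT / stable limit for $\log Q_n(x_n)$ around $xf(n)\,\mathbb E[\log \bar Y]/\mathbb EL$.

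To handle $\log Q_n(x_n)$, the plan is to use two inputs already established in the proof of Proposition~\ref{prop:fringe_trees}: (a)~the in-distribution identity $\log Q_n(x_n)=\sum_{\ell=1}^{x_n h(n)}\log\widetilde Z_\ell$ with $(\widetilde Z_\ell)_{\ell\ge 1}$ i.i.d.\ copies of $\bar Y$; and (b)~the identification of the number of terms $x_n h(n)$ as, in distribution, the renewal count $N(t_n):=\inf\{j\ge1\colon\sum_{\ell\le j}A_\ell\ge t_n\}$ associated with an independent i.i.d.\ $\varpi$-distributed sequence $(A_\ell)_{\ell\ge 1}$, where $t_n:=x_n f(n)$. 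The equality $x_n F_i(n)=x_n f(n)$ holds with probability tending to $1$ because $x<C(f)$. One then decomposes
\[
\log Q_n(x_n)-\frac{xf(n)\mathbb E[\log \bar Y]}{\mathbb EL}
\;=\;\underbrace{\sum_{\ell=1}^{N(t_n)}\!\big(\log\widetilde Z_\ell-\mathbb E[\log \bar Y]\big)}_{=:\,\mathrm{I}_n}
\;+\;\mathbb E[\log \bar Y]\cdot\underbrace{\Big(N(t_n)-\frac{t_n}{\mathbb EL}\Big)}_{=:\,\mathrm{II}_n}
\;+\;\frac{\mathbb E[\log \bar Y]}{\mathbb EL}(t_n-xf(n)),
\]
the remainder being negligible since $x_n\to x$ (and identically zero when $x_n$ is deterministic, which is the only case required in the proof of Theorem~\ref{th:territories}).

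For part~(i), Anscombe's random-index CLT applied to $\mathrm I_n$ together with $N(t_n)/t_n\to 1/\mathbb EL$ in probability yields $\mathrm I_n/\sqrt{t_n}\Rightarrow\mathcal N(0,\mathrm{Var}(\log\bar Y)/\mathbb EL)$, while the classical renewal CLT gives the corresponding Gaussian limit for $\mathrm{II}_n/\sqrt{t_n}$. Independence of the i.i.d.\ sequences $(\widetilde Z_\ell)$ and $(A_\ell)$ ensures that the joint limit of $(\mathrm I_n,\mathrm{II}_n)/\sqrt{t_n}$ is a bivariate Gaussian with independent marginals, and summing the variances (weighted by the factor $\mathbb E[\log \bar Y]$ on the $\mathrm{II}_n$-contribution) produces the variance stated in the lemma. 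For part~(ii), one appeals to the $\alpha$-stable version of the renewal CLT --- a standard consequence of the $\alpha$-stable limit theorem for $\sum A_\ell$ (see~\cite{GK}) combined with the continuity of the first-passage functional --- which gives $\mathrm{II}_n/t_n^{\nicefrac1\alpha}\Rightarrow -(\mathbb EL)^{-1-\nicefrac1\alpha}\,\Upsilon(\alpha)$; meanwhile $\mathrm I_n=\Op(\sqrt{N(t_n)})=\Op(\sqrt{t_n})$ is negligible compared to $t_n^{\nicefrac1\alpha}$ since $\alpha<2$. Using the identity $\mathbb E[\log \bar Y]=-\mu=-\mathbb E[\log(\nicefrac1{\bar Y})]$ yields precisely the constant prefactor of $\Upsilon(\alpha)$ in the statement.

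The main obstacle I anticipate is the random-index feature of $\mathrm I_n$, whose number of summands is itself driven by the independent edge-length sequence: Anscombe's theorem in part~(i) and a functional $\alpha$-stable invariance principle composed with the continuity of the first-passage functional in part~(ii) are precisely tailored to this, and the independence of $(\widetilde Z_\ell)$ and $(A_\ell)$ is essential in order to factor the joint limit. A secondary technical point is the replacement of $t_n=x_n f(n)$ by $xf(n)$ in the centering: this is automatic when $x_n$ is deterministic, and otherwise requires $|x_n-x|=\op(1/\sqrt{f(n)})$ in part~(i) and $\op(f(n)^{\nicefrac1\alpha-1})$ in part~(ii), which can be verified in the specific applications arising in the proof of Theorem~\ref{th:territories}.
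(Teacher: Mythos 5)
Your proposal follows essentially the same route as the paper: the same three-term decomposition (with the boundary terms $\Op(1)$ and $\op(1)$ after normalisation), then a random-index/Anscombe CLT for $\sum\log\widetilde Z_\ell$ combined with a CLT (resp.\ stable limit) for the edge-length sums that determine the random number of factors, using independence of the two sequences; the only cosmetic difference is that you phrase the index as the renewal count $N(t_n)$ and invoke the renewal CLT, while the paper keeps the height $xh(n)$ and applies the CLT to the walk $xF_i(n)=\sum_{\ell\le xh(n)}L_\ell$ at the first-passage index — these are equivalent. Three caveats. First, ``no uniformity is required'' is not quite right: since $x_n$ is random, you cannot simply evaluate the exponential-Chebyshev bound ``at the single point $x=x_n$''; you need the supremum over a deterministic window $[x-\varepsilon,x+\varepsilon]$ on the event $\{x_n\in[x-\varepsilon,x+\varepsilon]\}$, which is exactly how the paper uses \eqref{eq:3rd} — so the uniform statement from Proposition~\ref{prop:fringe_trees} is still what you need. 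Second, check your variance tally in (i): with your own normalisation the renewal CLT gives $\mathrm{II}_n/\sqrt{t_n}\Rightarrow\mathcal N\big(0,\mathrm{Var}(L)/(\mathbb EL)^3\big)$, so the second contribution is $\mathbb E[\log\bar Y]^2\mathrm{Var}(L)/(\mathbb EL)^3$, not $/(\mathbb EL)^2$ as displayed in the lemma; this actually coincides with what the paper's own computation (prefactor $\sqrt{h(n)/F_i(n)}\to(\mathbb EL)^{-1/2}$ applied to both terms) delivers, and is consistent with your part-(ii) coefficient $(\mathbb EL)^{-1-\nicefrac1\alpha}$ at $\alpha=2$, so you should not assert that your sum ``produces the variance stated'' without flagging this discrepancy. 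Third, the centering issue you raise (replacing $x_nf(n)$ by $xf(n)$ needs $|x_n-x|=\op(f(n)^{-\nicefrac12})$, resp.\ $\op(f(n)^{\nicefrac1\alpha-1})$) is a genuine point that the paper's proof passes over silently; under the lemma's hypothesis ($x_n\to x$ in probability only) it is needed, and in the application to Theorem~\ref{th:territories} the fluctuations of $x_n^{\sss(i)}$ are in fact of order $1/\sqrt{\log n}$, so the verification you defer to that application is more delicate than you suggest rather than automatic.
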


\begin{proof}
In this proof, we use the notation introduced in the proof of Proposition~\ref{prop:fringe_trees}.

{\bf (i)} Fix $\varepsilon>0$ such that $x+\varepsilon<C(f)$ and $x-\varepsilon >0.$ By assumption, the probability of the good events $\mathcal G_n := \{x_n\in [x-\varepsilon, x+\varepsilon]\}$
 tends to~1 as $n\to+\infty$.
On this event, we have
\begin{align*}
\left|\log\Big(\frac{D_n^{\sss (i)}(x_n f)}n\Big)-\log Q_n(x_n)\right|
&= \left|\log\Big(\frac{D_n^{\sss (i)}(x_n f)}{(n-k_n(x_n))Q_n(x_n)}\Big)+\log \Big(1-\frac{k_n(x)}{n}\Big)\right|\\
&\leq \left|\log\Big(\frac{D_n^{\sss (i)}(x_n f)}{(n-k_n(x_n))Q_n(x_n)}\Big)\right|
+ \bigg|\log \Big(1-\frac{k_n}{n}\Big)\bigg|,
\end{align*}
because $k_n(x_n) \leq k_n$, 
by definition.
By~\eqref{eq:3rd}, 
we have on $\mathcal G_n$ that
\[\left|\log\Big(\frac{D_n^{\sss (i)}(x_n f)}{(n-k_n(x_n))Q_n(x_n)}\Big)\right|
\leq \sup_{x_n\in[x-\varepsilon, x+\varepsilon]} \left|\log\Big(\frac{D_n^{\sss (i)}(x f)}{(n-k_n(x))Q_n(x)}\Big)\right|
= \op(1).\]
We thus get that on $\mathcal G_n,$
\[\left|\log\Big(\frac{D_n^{\sss (i)}(x_n f)}n\Big)-\log Q_n(x_n)\right|
= \log(1-\widetilde U) + \op(1),\]
which implies that
\begin{equation} \label{eq:O1Approx}
\left|\log\Big(\frac{D_n^{\sss (i)}(x_n f)}n\Big)-\log Q_n(x_n)\right|
=\Op(1)\quad\text{ as }n\to+\infty.
\end{equation}

Now recall that
\[\log  Q_n(x_n) = \sum_{\varnothing\neq u\preccurlyeq \xi(k_n(x_n))} \log Z_u\]
is a sum of $x_n h(n)$ i.i.d.\ random variables with finite variance,
since $\mathrm{Var}(\log \bar Y) <+\infty$ by assumption.
By definition, for all $y>0$, the node $\xi(k_n(y))$ is the ancestor of $U_i(n)$ closest to the root
whose $\bs L$-distance to the root is at least $yh(n)$. 
Therefore, almost surely, for all $y\leq z$, 
$\xi(k_n(y))$ is an ancestor of $\xi(k_n(z))$ 
(which includes the case when both nodes are equal).
In other words, as $y$ increases from $x-\varepsilon$ to $x+\varepsilon$, 
$\xi(k_n(y))$ goes through the ancestors of $U_i(n)$ at $\bs L$-distance to the root between $(x-\varepsilon)h(n)$ and $(x+\varepsilon)h(n)$, in that order. 
Therefore, in distribution, we have, jointly for all $y\in[x-\varepsilon,x+\varepsilon]$, 
\[\log  Q_n(y) 
= \sum_{i=1}^{|\xi(k_n(y))|}\log \widetilde Z_i = \sum_{i=1}^{yh(n)}\log \widetilde Z_i,\]
where the sequence $(\widetilde Z_i)_{i\geq 1}$ is a sequence of i.i.d.\ random copies of~$\bar Y$, 
because, by definition, $|\xi(k_n(y))| = yh(n)$ for all $y>0$.
Thus, by the central limit theorem with random index (see, e.g., \cite[Exercise~3.4.6]{Durrett}),
we get
\begin{equation}\label{eq:first_CLT}
\frac{\log  Q_n(x_n) - xh(n)\mathbb E[\log\bar Y]}{\sqrt{xh(n)}}
\Rightarrow \mathcal N(0,\mathrm{Var}(\log \bar Y)).
\end{equation}
Also recall that, by definition, $xF_i(n) = \sum_{i=1}^{xh(n)} L_i$ in distribution, and thus applying the central limit theorem, but this time to the sequence $\bs L$, we get that, if $\mathrm{Var}(L)<+\infty$, then 
\begin{equation}\label{eq:sec_CLT}
\frac{xF_i(n)-xh(n)\mathbb EL}{\sqrt{xh(n)}} \Rightarrow \mathcal N(0,\mathrm{Var}(L)).
\end{equation}
Note that, by the independence of the sequence $\bs L$ and the rest of the process, the two limits in~\eqref{eq:first_CLT} and~\eqref{eq:sec_CLT} hold jointly, and the two Gaussians are independent.
Combining \eqref{eq:O1Approx} to \eqref{eq:sec_CLT}, the above yields
\begin{align*}
&\sqrt{xF_i(n)}\left(\frac{\log (D_n^{\sss (i)}(x_n f)/n)}{xF_i(n)} - \frac{\mathbb E[\log \bar Y]}{\mathbb EL}\right)\\
&\hspace{2cm}=
\sqrt{\frac{h(n)}{F_i(n)}} \left(\frac{\log  Q_n(x_n) +\Op(1)-xh(n)\mathbb E[\log\bar Y]}{\sqrt{xh(n)}} 
+\frac{\mathbb E[\log \bar Y]}{\mathbb EL}\cdot \frac{xh(n)\mathbb EL - x_n F_i(n)}{\sqrt{xh(n)}}\right)\\
&\hspace{2cm}\Rightarrow \mathcal N\bigg(0,\frac{\mathrm{Var}(\log \bar Y)}{\mathbb EL}+\frac{\mathbb E[\log \bar Y]^2\mathrm{Var}(L)}{(\mathbb EL)^2}\bigg).
\end{align*}
In~\eqref{eq:cvprob_Fi/f} we have proved that $F_i(n)\sim f(n)$ in probability as $n\to+\infty$, but in fact, this statement can be made stronger: we have $F_i(n) = f(n)$ as soon as $|U_i(n)|_{\bs L}\geq xf(n)$, an event whose probability tends to~1 with~$n$ because either $f(n)= o(\log n)$ or $f(n) = \log n$ and $x<\mathbb EL/\mu$.
This implies~(i).

{\bf (ii)} Under the assumption that $\mathbb P(L\geq x)=\ell(x)x^{-\alpha}$ with $\alpha\in (1, 2)$,
the limit in \eqref{eq:sec_CLT} does not hold, instead we have that
\[\frac{xF_i(n)-xh(n)\mathbb EL}{(xh(n))^{\nicefrac1\alpha}}
\Rightarrow \Upsilon(\alpha),\]
where $\Upsilon(\alpha)$ is an $\alpha$-stable distribution.
Thus
\begin{align*}
&\frac{\log (D_n^{\sss (i)}(x_nf)/n)-xF_i(n)\mathbb E[\log \bar Y]/\mathbb EL}{(xF_i(n))^{\nicefrac1\alpha}}\\
&=\bigg(\frac{h(n)}{F_i(n)}\bigg)^{\!\nicefrac1\alpha}
\left(\frac{\mathbb E[\log \bar Y]}{\mathbb EL}\cdot \frac{xh(n)\mathbb EL - xF_i(n)}{(xh(n))^{\nicefrac1\alpha}}
+ \frac{\log  Q_n(x) +\Op(1) - xh(n)\mathbb E[\log\bar Y]}{(xh(n))^{\nicefrac1\alpha}}\right)
\Rightarrow \frac{\mathbb E[\log(\nicefrac1{\bar Y})]}{(\mathbb EL)^{1+\nicefrac1\alpha}}\Upsilon(\alpha),
\end{align*}
where the second summand now vanishes in the limit.
This concludes the proof of (ii) because $\mu = \mathbb E[\log(\nicefrac1{\bar Y})]$, and because $F_i(n) = f(n)$ with probability tending to~1 when~$n$ tends to infinity.
\end{proof}

\begin{proof}[Proof of Theorem \ref{th:territories}]
For this proof, we consider that the infections ``creep along edges'' between the times at which they infect vertices: if two infections of respective speeds $s$ and $s'$ start at two neighbouring vertices $v$ and $v'$ (respectively), then they meet at distance from $v$ proportional to $\frac s{s+s'}$.

For any two infections $i$ and $\ell$, 
the $\bs L$-distance between $U_i(n)$ and $U_\ell(n)$ is equal to
\[\Delta_{i,\ell} := |U_i(n)|_{\bs L}+|U_\ell(n)|_{\bs L}-2|U_i(n)\wedge U_\ell(n)|_{\bs L}.\]
Therefore, the time $t_{i,\ell}$ at which epidemics $i$ and $\ell$ would meet if there were no other infection at play is equal to the time it would take for an infection of speed $s_i+s_\ell$ to cross a distance $\Delta_{i,\ell}$, i.e.
\begin{equation}\label{eq:til}
t_{i,\ell} = \frac{\Delta_{i,\ell}}{s_i+s_\ell}
= \frac{|U_i(n)|_{\bs L}+|U_\ell(n)|_{\bs L}-2|U_i(n)\wedge U_\ell(n)|_{\bs L}}{s_i+s_\ell}.
\end{equation}
Therefore, in the absence of the other $k-2$ infections, the $i$-th and $\ell$-th infections would meet 
at $\bs L$-distance to the root equal to the maximum of
\begin{equation}\label{eq:meeting}
|U_i(n)|_{\bs L} - s_i t_{i,\ell} \quad\text{ and }\quad
|U_\ell(n)|_{\bs L} - s_\ell t_{i,\ell}.
\end{equation}
Thus, on the event (recall $K_n$ from \eqref{eq:Kdef}),
\[\mathcal A_n 
= \bigcap_{1\leq i<\ell\leq k}\Big\{|U_\ell(n)|_{\bs L} - s_i t_{i,\ell}\geq K_n\Big\},\]
for all $1\leq i<\ell\leq k$, 
if we ignored all other $k-2$ epidemics,
the epidemics started respectively at $U_i(n)$ and $U_\ell(n)$ would meet at $\bs L$-distance to the root at least $K_n$.
Also note that the probability of $\mathcal A_n$ goes to one when $n\to+\infty$ by Lemmas~\ref{lem:LCA} and~\ref{lem:real_heights_finite_var}. 
Thus it is enough to restrict ourselves to the set where $\mathcal A_n$ holds.

We let $\kappa = \kappa(n) = \argmin \{1\leq \ell\leq j\colon |U_\ell(n)|_{\bs L}\}$. 
On the event $\mathcal A_n$, for all $i\neq \kappa$, 
the territory of the $i$-th infection neighbours a unique other territory, 
and this neighbouring territory is the territory of the $\kappa$-th epidemic.
We let $d_i(n)$ denote the $\bs L$-distance 
from the root to the point where they meet (this point can be in the middle of an edge). 
On $\mathcal A_n$, the territory of the $i$-th infection is the subtree of $\tau_n$ rooted at
the ancestor of $U_i(n)$ closest to the root whose $\bs L$-distance to the root is at least $d_i(n)$.
We now show that
\begin{equation} \label{eq:miConv}
\frac{d_i(n)}{\log n} \to 
\frac{s_1-s_i}{s_1+s_i}\cdot\frac{\mathbb EL}{\mu}\quad\text{ in probability as }n\to+\infty.
\end{equation}
Indeed,
first note that, by~\eqref{eq:til} and~\eqref{eq:meeting},
under $\mathcal A_n$, 
the $i$-th and $\kappa$-th infections meet at $\bs L$-distance to the root equal to
$|U_i(n)|_{\bs L} - s_i t_{i, \kappa}$, and thus
\[d_i(n) = |U_i(n)|_{\bs L}-s_i t_{i,\kappa}.\]
By Lemma~\ref{lem:LCA}, and using the notation $H_n=\max_{1\le i < k \le n} |U_i(n)\wedge U_k(n)|$, we get
\begin{equation}\label{eq:top_height_negl}
0\leq \frac{|U_i(n)\wedge U_\kappa(n)|_{\bs L}}{\log n}
\leq \frac{H_n}{\log n} \to 0 \quad\text{ in probability as }n\to+\infty,
\end{equation}
implying that, as $n\to+\infty$,
\begin{equation}\label{eq:approx_tcross}
t_{i, \kappa} = \frac{|U_i(n)|_{\bs L}+|U_\kappa(n)|_{\bs L}}{s_1+s_i} + \op(\log n),
\end{equation}
which yields
\[d_i(n) 
= \frac{s_1|U_i(n)|_{\bs L}-s_i |U_\kappa(n)|_{\bs L}}{s_1+s_i} + \op(\log n),\quad\text{ as }n\to+\infty.\]
Since, by Lemmas~\ref{lem:real_heights_finite_var} and~\ref{lem:real_height_heavy_tails}, $U_i(n)=(\mathbb EL/\mu+\op(1)) \log n$ as $n\to+\infty$ (and similarly for $U_\kappa(n)$), we get~\eqref{eq:miConv}.

As argued above, on $\mathcal A_n$, $\Ter_{t,d}^{\sss (i)}((U_1, s_1), \ldots, (U_k, s_k))$ is the subtree of $\tau_n$ rooted at the ancestor of $U_i(n)$ closest to the root whose $\bs L$-distance is at least $x^{\sss (i)}_n\log n$, where, by~\eqref{eq:miConv},
\[x^{\sss (i)}_n := \frac{d_i(n)}{\log n}
=\frac{s_1-s_i}{s_1 + s_i}\cdot\frac{\mathbb E L}{\mu} + \op(1).\]
We set $x^{\sss (i)} = \lim_{n\to+\infty} x_n^{\sss (i)}$ (with the limit holding in probability).
Thus, by Lemma~\ref{lem:CLT_random_index}(i), and because $W_{(i)}(n) = D_n^{\sss (i)}(x^{\sss (i)}_n f)$, we get
\[\sqrt{x^{\sss (i)}\log n}\left(\frac{\log(W_{(i)}(n)/n)}{x^{\sss (i)}\log n}-\frac{\mathbb E[\log \bar Y]}{\mathbb EL}\right)\Rightarrow
\mathcal N\bigg(0, \frac{\mathrm{Var}(\log \bar Y)}{\mathbb EL}+\frac{\mathbb E[\log \bar Y]^2\mathrm{Var}(L)}{(\mathbb E L)^2}\bigg),\]
if the random variable $L$ has finite variance. 
This implies
\[\frac{\log(W_{(i)}(n)/n) + \frac{s_1-s_i}{s_1+s_i}\log n}{\sqrt{\frac{s_1-s_i}{s_1+s_i}\log n}}
\Rightarrow \mathcal N\bigg(0, \frac{\mathrm{Var}(\log \bar Y)\mathbb EL}{\mathbb E[\log\bar Y]^2}+\mathrm{Var}(L)\bigg),
\]
as claimed.
In the case when the edge length are heavy-tailed, we get from Lemma~\ref{lem:CLT_random_index}(ii) that
\[\frac{\log (W_{(i)}(n)/n)-x^{\sss (i)}(\log n)\mathbb E[\log \bar Y]/\mathbb EL}{(x^{\sss (i)}\log n)^{\nicefrac1\alpha}}
\Rightarrow \frac{\mu}{(\mathbb E L)^{1+\nicefrac1\alpha}}\Upsilon(\alpha),\]
which implies
\[\frac{\log (W_{(i)}(n)/n)+\frac{s_1-s_i}{s_1+s_i} \log n}{(\frac{s_1-s_i}{s_1+s_i} \log n)^{\nicefrac1\alpha}}
\Rightarrow \frac{\mu^{1-\nicefrac1\alpha}}{\mathbb E L}\Upsilon(\alpha).
\]
Because the probability of $\mathcal A_n$ converges to~1 as $n\to+\infty$, this concludes the proof of~\eqref{eq:terMinorConv}.
%

To prove the convergence in \eqref{eq:terPrincipalConv}, note that
 it follows from Theorem \ref{th:voronoi} and  \eqref{eq:terMinorConv} that the sizes $W_{(i)}(n)$, $j+1 \le i \le k$ are $o(W_{(\ell)}(n))$ for all $\ell \in \{1, \ldots, j\}$ 
with high probability. This implies  \eqref{eq:terPrincipalConv}.
\end{proof}

\textbf{Acknowledgement.} We acknowledge support from DFG through the scientific network
\emph{Stochastic Processes on Evolving Networks}.

\bibliographystyle{alpha}
\bibliography{Voronoi}

\end{document}